\newtheorem{thm}{Theorem}[section]
\newtheorem{ques}{Question}
\theoremstyle{definition}
\newtheorem{cor}[thm]{Corollary}
\newtheorem{lem}[thm]{Lemma}
\newtheorem{prop}[thm]{Proposition}
\newtheorem{defn}[thm]{Definition}
\theoremstyle{remark}
\numberwithin{equation}{section}
\begin{document}

\title[Generic behavior of a measure preserving transformation]{Generic behavior of a measure preserving transformation}

\author{Mahmood Etedadialiabadi}


\address{Department of Mathematics\\
University of Illinois\\
1409 W. Green St.\\
Urbana, IL 61801, USA}

\email{etedadi2@math.uiuc.edu}


\keywords{}

\begin{abstract}
Del Junco--Lema\'nczyk [\ref{DL_B}] showed that a generic measure preserving transformation satisfies a certain orthogonality conditions. More precisely, there is a dense $G_\delta$ subset of measure preserving transformations such that for every $T\in G$ and $k(1), k(2), \dots, k(l)\in \mathbb{Z}^+$, $k'(1), k'(2), \dots, k'(l')\in \mathbb{Z}^+$, the convolutions
\[
\sigma_{T^{k(1)}}\ast\cdots\ast\sigma_{T^{k(l)}} \ \text{and} \ \sigma_{T^{k'(1)}}\ast\cdots\ast\sigma_{T^{k'(l')}}
\]
where $\sigma_{T^k}$ is the maximal spectral type of $T^k$, are mutually singular, provided that $(k(1), k(2), \dots, k(l))$ is not a rearrangement of $(k'(1), k'(2), \dots, k'(l'))$. We will introduce analogous orthogonality conditions for continuous unitary representations of the group of all measurable functions with values in the circle, $L^0(\mu,\mathbb{T})$, which we denote by the DL--condition. We connect the DL--condition with a result of Solecki [\ref{S1_B}] which identifies continuous unitary representations of $L^0(\mu,\mathbb{T})$ with a collection of measures $\{\lambda_\kappa\}$ where $\kappa$ runs over all increasing finite sequence of non-zero integers. In particular, we show that the ``probabilistic'' DL--condition translates to ``deterministic'' orthogonality conditions on the measures $\lambda_\kappa$. As a corollary, we show that the same orthogonality conditions as in the result by Del Junco--Lema\'nczyk hold for a generic unitary operator on a separable infinite dimensional Hilbert space.
\end{abstract}

\maketitle
\bigskip\noindent

\section{Introduction}
The main spectral property of a generic $T\in\operatorname{Aut}(X,\mu)$ is given by Del Junco--Lema\'nczyk [\ref{DL_B}] where $\operatorname{Aut}(X,\mu)$ is the set of all measure preserving transformations on a Borel measure space $(X,\mu)$. They proved that for a generic $T\in\operatorname{Aut}(X,\mu)$, for every $k(1), k(2), \dots, k(l)\in\mathbb{Z}^+$ and $k'(1), k'(2), \dots, k'(l')\in \mathbb{Z}^+$, the convolutions
	\[
	\sigma_{T^{k(1)}}\ast\cdots\ast\sigma_{T^{k(l)}} \ \text{and} \ \sigma_{T^{k'(1)}}\ast\cdots\ast\sigma_{T^{k'(l')}}
	\]
where $\sigma_{T^k}$ is the maximal spectral type of $T^k$, are mutually singular, provided that $(k(1), \dots, k(l))$ is not a rearrangement of $(k'(1), \dots, k'(l'))$. 
Earlier, Katok [\ref{Kat_B}], Stepin [\ref{Ste_B}], Choksi and Nadkarni [\ref{Cho_B}] proved special cases of the above property when $l=l'=1$ and when all the powers $k(i),k'(j)$, are equal to $1$. 

By analogy with the above Del Junco--Lema\'nczyk [\ref{DL_B}] statement, we define an orthogonality condition for a continuous unitary representation of $L^0(\mu,\mathbb{T})$ (we justify the analogy later in the introduction). Here $L^0(\mu,\mathbb{T})$ is the set of all $\mu$-equivalence classes of Borel measurable functions from $(X,\mu)$ to the unit circle. We consider $L^0(\mu,\mathbb{T})$ with pointwise multiplication and convergence in measure topology which makes it a Polish group. We say a continuous unitary representation of $L^0(\mu,\mathbb{T})$,
\[
\Phi:L^0(\mu,\mathbb{T})\rightarrow \mathcal{U}(H)
\]
satisfies the \textbf{DL--condition} if for a generic Borel measurable function $f\in L^0(\mu,\mathbb{T})$, for every ${k(1), \dots, k(l)\in \mathbb{Z}^+}$, $k'(1), \dots, k'(l')\in \mathbb{Z}^+$, the convolutions
	\[
	\sigma_{\Phi(f)^{k(1)}}\ast\cdots\ast\sigma_{\Phi(f)^{k(l)}} \ \text{and} \ \sigma_{\Phi(f)^{k'(1)}}\ast\cdots\ast\sigma_{\Phi(f)^{k'(l')}}
	\]
	are mutually singular, provided that $(k(1), \dots, k(l))$ is not a rearrangement of $(k'(1), \dots, k'(l'))$. Here $\mathcal{U}(H)$ is the group of all unitary operators on a separable infinite dimensional Hilbert space $H$. We consider $\mathcal{U}(H)$ with the strong topology, that is, pointwise convergence. Note that $\mathcal{U}(H)$ with the strong topology is a Polish group.
		
This raises the natural question of characterizing unitary representations of $L^0(\mu,\mathbb{T})$ that satisfy the DL--condition. Our aim is to give such a characterization in terms of the objects appearing in the following result of Solecki [\ref{S1_B}]. Solecki [\ref{S1_B}, Theorem $2.1$] showed that every continuous unitary representation of $L^0(\mu,\mathbb{T})$ is a direct sum of action by pointwise multiplication on measure spaces $(X^{|\kappa|},\lambda_\kappa)$ where $\kappa$ is an increasing finite sequence of non-zero integers and $\lambda_\kappa$ is a finite measure on $X^{|\kappa|}$ whose marginal measures are absolutely continuous with respect to $\mu$.

 In this paper, we characterize unitary representations of $L^0(\mu,\mathbb{T})$ that satisfy the DL--condition in terms of orthogonality conditions on the measures $\lambda_\kappa$. We show that a unitary representation of $L^0(\mu,\mathbb{T})$, $\Phi=\bigoplus\sigma(\kappa,\lambda_\kappa)$, satisfies the DL--condition if and only if we have (see Definition \ref{r} and Definition \ref{k})
\begin{equation}\label{equivalence}
\lambda_{\kappa_1}\times\lambda_{\kappa_2}\times\cdots\times\lambda_{\kappa_l} \perp r(\lambda_{\kappa_{1}'}\times\lambda_{\kappa_{2}'}\times\dots\times\lambda_{\kappa_{l'}'})
\end{equation}
for every $(\kappa_1, \kappa_2, \dots, \kappa_l)$, $(\kappa_1', \kappa_2', \dots, \kappa_{l'}')$, and $r\in \operatorname{S}_t$ where $\operatorname{S}_t$ is the permutation group of $\{1,2,\dots,t\}$, such that
\[
k(1)\kappa_1+k(2)\kappa_2+\cdots+k(l)\kappa_l= r\Big(k'(1)\kappa_{1}'+k'(2)\kappa_{2}'+\cdots+k'(l')\kappa_{l'}'\Big) 
\]
for some non-zero integers $(k(1), k(2), \dots, k(l))$ and $(k'(1), k'(2), \dots, k'(l'))$, provided that one is not a rearrangement of the other. Note that this equivalence translates the ``probabilistic'' statement of the DL--condition to a ``deterministic'' condition on the measures $\lambda_\kappa$.

Now we give justification for the above analogy. There is a question by Glasner and Weiss which asks is it true that for a generic $T\in \operatorname{Aut}(X,\mu)$
\[
\overline{\langle T\rangle}\cong L^0(\mu,\mathbb{T})?
\]
In fact, they seem to hint that the answer is positive.  

There is evidence to support the suggestion by Glasner and Weiss. In the following, we mention some of the known results about generic behavior of $T\in \operatorname{Aut}(X,\mu)$ and state how they support the suggestion by Glasner and Weiss. Mellaray--Tsankov [\ref{MT_B}, Theorem $1.4$] proved that for a generic measure preserving transformation $T$, $\overline{\langle T\rangle}$ is extremely amenable. Recall that a Polish group is called \textbf{extremely amenable} if every continuous action of the group on a compact Hausdorff space has a fixed point. Note that $L^0(\mu,\mathbb{T})$ is extremely amenable. 

Furthermore, Glasner--Weiss [\ref{GW_B}, Theorem $5.2$] proved that for a generic measure preserving transformation, $T\in \operatorname{Aut}(X,\mu)$, the action of $\overline{\langle T\rangle}$ on the Borel measure space $(X,\mu)$ is whirly. Recall that an action of a topological group $G$ on a Borel measure space $(X,\mu)$ is whirly if for every Borel subset of $X$ with a positive measure, $A$, and every non-empty open neighborhood of the identity of $G$, $U$, $\mu(UA)=1$. Note that ergodic actions of $L^0(\mu,\mathbb{T})$ are whirly [\ref{GW_B}, Theorem $3.11$].

   Moreover, Solecki [\ref{S2_B}, Corollary $2$] showed that for a generic measure preserving transformation, $T$, $\overline{\langle T\rangle}$ is a continuous homomorphic image of a closed linear subspace of $L^0(\lambda,\mathbb{R})$, where $\lambda$ is the Lebesgue measure on the set of real numbers. Obviously, $L^0(\lambda,\mathbb{T})$ is a continuous homomorphic image of $L^0(\lambda,\mathbb{R})$ with $f\to e^{if}$.
	
	Finally, Mellaray--Tsankov [\ref{MT_B}, Theorem $4.4$] considered generic behavior of a unitary operator on a separable infinite dimensional Hilbert space. They showed that in $\mathcal{U}(H)$, the unitary group of a separable infinite dimensional Hilbert space $H$ with the strong topology, for a generic $u\in\mathcal{U}(H)$
\[
\overline{\langle u\rangle}\cong L^0(\mu,\mathbb{T}).
\]
Note that $\operatorname{Aut}(X,\mu)$ is isomorphic to a closed subset of $\mathcal{U}(L^2(X,\mu))$.

Assuming that 
\begin{equation}\label{equationintro}
\overline{\langle T\rangle}\cong L^0(\mu,\mathbb{T})
\end{equation}
for a measure preserving transformation $T\in \operatorname{Aut}(X,\mu)$, we can define a continuous unitary representation of $L^0(\mu,\mathbb{T})$ by 
\[
\Psi:L^0(\mu,\mathbb{T}) \cong \overline{\langle T\rangle} \hookrightarrow \operatorname{Aut}(X,\mu)\subseteq \mathcal{U}(L^2(X,\mu)).
\]
This suggests that properties of measure preserving transformations with (\ref{equationintro}) can be translated to properties of continuous unitary representations of $L^0(\mu,\mathbb{T})$.

Furthermore, if a generic $T\in \operatorname{Aut}(X,\mu)$ satisfies (\ref{equationintro}), then by Solecki [\ref{S2_B}, Lemma 3]), a generic $T\in \operatorname{Aut}(X,\mu)$ is in the range of a continuous unitary representation of $L^0(\mu,\mathbb{T})$ such that the representation satisfies the DL--condition. This property suggests an approach to answer the question by Glasner and Weiss. In particular, if one could show that a generic $T\in \operatorname{Aut}(X,\mu)$ is in the range of a continuous unitary representation of $L^0(\mu,\mathbb{T})$ with the DL--condition, then this strengthen the suggestion that the answer to the question is positive. On the contrary, if one could show that for a non-meager $T\in \operatorname{Aut}(X,\mu)$, $T$ is not in the range of a continuous unitary representation of $L^0(\mu,\mathbb{T})$ with the DL--condition, then the answer to the question is negative.

  In Section $3$, we define a notion of disjointness for a $f\in L^0(\mu,\mathbb{T})$ and we show that a generic $f\in L^0(\mu,\mathbb{T})$ satisfies this notion of disjointness. In Section $4$ and $5$, we use the result of Section $3$ to prove the equivalence (\ref{equivalence}). In Section $6$, we use the equivalence (\ref{equivalence}) to show that the orthogonality conditions proved by Del Junco--Lema\'nczyk [\ref{DL_B}] also hold for a generic $u\in \mathcal{U}(H)$, that is, the convolutions
	\[
	\sigma_{u^{k(1)}}\ast\cdots\ast\sigma_{u^{k(l)}} \ \text{and} \ \sigma_{u^{k'(1)}}\ast\cdots\ast\sigma_{u^{k'(l')}}
	\]
	are mutually singular, provided that $(k(1), k(2), \dots, k(l))$ is not a rearrangement of $(k'(1), k'(2), \dots, k'(l'))$.
 
\bigskip\noindent

\section{Preliminaries}
In this section, we introduce the notation that we use in the paper. By a standard Borel measure space $(X,\mu)$ we mean a standard Borel space, $X$, equipped with a non-atomic Borel probability measure on $X$, $\mu$. All such spaces are Borel isomorphic to $([0,1],\lambda)$, where $\lambda$ is the Lebesgue measure on the Borel subsets of $[0,1]$. We denote by $\operatorname{Aut}(X,\mu)$, and sometimes just $\operatorname{Aut}(\mu)$ when $X$ is understood, the group of Borel automorphisms of $X$ which preserve the measure $\mu$, that is for every $T\in \operatorname{Aut}(X,\mu)$ and $A$, a Borel subset of $X$, $A$, $T(A)$, and $T^{-1}(A)$ have the same measure. In which we identify two Borel automorphisms if they coincide $\mu$--almost everywhere, that is, they coincide on a full measure subset of $X$. When we talk about Borel subsets of $(X,\mu)$, we usually consider them up to null sets.\\

There are two fundamental topologies on $\operatorname{Aut}(X,\mu)$, the weak and the uniform topology. In this paper, we consider $\operatorname{Aut}(X,\mu)$ with the weak topology. Denote by $\text{MALG}_\mu$ the measure algebra of $\mu$, that is, the algebra of
Borel subsets of X modulo null sets. It is a Polish Boolean algebra under the topology given by the complete metric
\[
d(A, B) = d_\mu(A, B) = \mu(A\Delta B),
\]
where $\Delta$ is the symmetric difference. The weak topology is the topology generated by the functions 
\[
T\mapsto T(A), \ A\in \text{MALG}_\mu.
\]
With the weak topology, which we denote by $w$, $(\operatorname{Aut}(X,\mu),w)$ is a Polish topological group. A compatible left-invariant metric is given by
\[
\delta_w(S,T)=\sum 2^{-n} \mu(S(A_n)\Delta T(A_n)),
\]
where ${A_n}$ is a dense subset of $\text{MALG}_\mu$. We encourage the reader to view [\ref{K2_B}, Chapter I] for more information on the topological group $\operatorname{Aut}(X,\mu)$.\\

Let $(X,\mu)$ be a standard Borel measure space and $G$ be a topological group. By $L^0(X,\mu, G)$, or just $L^0(\mu, G)$ when $X$ is understood, we denote the topological group of all $\mu$-equivalence classes of Borel measurable functions on $X$ with values in $G$. We consider $L^0(\mu, G)$ with the pointwise multiplication and the convergence in measure topology. Furthermore, we assume that the measure $\mu$ is non-atomic. For a tuple $\kappa=(k_1,k_2,\dots,k_m)$ in $\mathbb{Z}$ and a function $f\in L^0(\mu, G)$, we define the function $f^\kappa: X^m\rightarrow G$ by
\begin{equation}\label{function}
f^\kappa(x_1,x_2,\dots,x_m)=\prod_{i=1}^m (f(x_i))^{k_i}.
\end{equation}
Moreover, for $A_1,\dots,A_k\subseteq X$ and $f_1,\dots,f_k\in L^0(\mu,G)$, we define
\[
f_1(A_1)\cdots f_k(A_k):=\{f_1(x_1)\cdots f_k(x_k) \ : \ \text{for every } x_1\in A_1,\dots, x_k\in A_k\}.
\]
In this paper, we usually consider $L^0(\mu, \mathbb{T})$, where $\mathbb{T}$ is the unit circle (also denoted by $S^1$ in the literature). Note that the set of all open subsets of the form
\[
U_{\epsilon,f_0}=\{f\in L^0(\mu,\mathbb{T}) \ : \ \int{\left|f-f_0\right|} d\mu < \epsilon \}
\]
where $\epsilon>0$ is a real number and $f_0\in L^0(\mu,\mathbb{T})$, is a basis for the topology on $L^0(\mu,\mathbb{T})$.
 
We denote the set of Borel probability measures on a standard Borel space, $X$, by $\mathbb{P}(X)$. 
Furthermore, $S_n$ denotes the permutation group of $\{1,2,\dots,n\}$. 

We introduce the following definitions which will be used to state the main theorem of Section $4$ (Theorem \ref{MT}).
\begin{defn}\label{r}
Let $r\in \operatorname{S}_n$, then $r$ induces three functions, all of which denoted by the same operator name, $r$:
\begin{enumerate}
	\item a function $r: X^n\rightarrow X^n$ which sends $(x_1,\dots,x_n)$ to $(x_{r(1)},\dots,x_{r(n)})$,
	
	\item a function $r: \mathbb{P}(X^n)\rightarrow \mathbb{P}(X^n)$ which sends $\mu\in\mathbb{P}(X)$ to the measure obtained by permuting coordinates of $X^n$ with respect to $r$,
	
	\item a function $r: \mathbb{Z}^n\rightarrow \mathbb{Z}^n$ which sends the tuple $\Big(k(1), k(2), \dots, k(n)\Big)$ to ${\bigg(k\Big(r(1)\Big), k\Big(r(2)\Big), \dots, k\Big(r(n)\Big)\bigg)}$.
\end{enumerate}
\end{defn}

\begin{defn}\label{k}
Let $\kappa=(k(1),k(2),\dots,k(l)),\kappa'=(k'(1),k'(2),\dots,k'(l'))$ be two sequences of integer numbers. We define:
\begin{enumerate}
\item $\kappa+\kappa'=(k(1),k(2),\dots,k(l),k'(1),k'(2),\dots,k'(l'))$,
\item for $d\in \mathbb{Z}$, $d\kappa=(dk(1),dk(2),\dots,dk(l))$.
\end{enumerate}
\end{defn}
For a Polish space $X$, we say a property, $P$, holds for comeagerly many $x\in X$ or $P$ holds for a generic $x\in X$ if the set of points in $X$ with property $P$ is comeager, that is, it contains an intersection of countably many dense open subsets of $X$.
\begin{defn}
Let $X,Y$ be two Polish spaces. We call a map $f:X\to Y$ \textbf{category preserving} if inverse image of a comeager subset of $Y$ is comeager in $X$. 
\end{defn}
Let $H$ be a separable complex Hilbert space. By $\mathcal{U}(H)$ we denote the group of all unitary operators on $H$. We consider $\mathcal{U}(H)$ with the strong topology, that is, pointwise convergence. This topology coincides with the weak topology on $\mathcal{U}(H)$ defined by
\[
T_k \rightarrow_w T \ \text{if for all} \  x, y \in H, \ \langle T_k(x), y\rangle \rightarrow \langle T(x), y\rangle.
\]
Note that $\mathcal{U}(H)$ with the strong topology is a Polish group. Let $u$ be a unitary operator on $H$ and $h\in H$. Then, there exists a unique measure (up to mutual absolute continuity), $\sigma_h$, on the unit circle, $\mathbb{T}$, such that for every integer number $k$
\[
\langle u^k(h),h\rangle=\int_\mathbb{T}{z^k} d\sigma_h.
\]
This measure is known as the spectral measure of the vector $h$. Let $C(h)$ be the closure of the set of all finite linear combinations of $\{u^k(h)\}_{k=1}^\infty$. Then, there are $\{h_i\}_{i=1}^\infty$ in $H$ such that
\begin{enumerate}
\item $C(h_i)\perp C(h_j)$ for $i\neq j$ and $H=\bigoplus C(h_i)$,
\item $\sigma_{h_{i+1}}\ll \sigma_{h_i}$ for every positive integer number $i$,
\item such sequence of measures, $\{\sigma_{h_i}\}_{i=1}^\infty$, is unique up to mutual absolute continuity.
\end{enumerate}
The maximal spectral type of $u$ is the largest measure among $\{\sigma_{h_i}\}_{i=1}^\infty$, that is, $\sigma_{h_1}$. Note that the maximal spectral type of $u$ is unique up to mutual absolute continuity. We denote the maximal spectral type of $u$ by $\sigma_u$. We encourage the reader to review Cornfeld--Fomin--Sinai [\ref{Sinai_B}, appendix 2] for more information on the maximal spectral type.
\bigskip\noindent

\section{Generic behavior of a function in $L^0 (\mu,\mathbb{T})$}
Let $(X,\mu)$ be a Borel probability measure. If $\alpha,\beta\in \mathbb{T}$ are independent over $\mathbb{Q}$, then non-zero powers of $\alpha$ and $\beta$ are distinct, that is, for a generic constant function in $L^0 (\mu,\mathbb{T})$ non-zero powers are distinct. In this section, we show that a similar property holds for a generic function in $L^0 (\mu,\mathbb{T})$. We use this property to prove the main theorem in the next section.\\

We need to introduce the following definitions before stating the main theorem of this section. In the following definition, we measure correlation (or intersection) between functions in $L^0 (\mu,\mathbb{T})$.
\begin{defn}\label{disjointness}
Let $\mu$ be a Borel probability measure on a standard Borel space $X$, and 
\[
{(f_1,\dots,f_m,g_1,\dots,g_n)\in L^0(\mu,\mathbb{T})^{m+n}}.
\]
We define
\[
\begin{split}
\rho_{m,n}&(f_1,\dots,f_m,g_1,\dots,g_n)\\
&=\operatorname{inf}\{\mu(A) \ : \ f_1(X\setminus A)\cdots f_m(X\setminus A)\cap g_1(X\setminus A)\cdots g_n(X\setminus A)=\emptyset\}.
\end{split}
\]
\end{defn}
Note that if $(f_1,\dots,f_m,g_1,\dots,g_n)$ and $(f'_1,\dots,f'_m,g'_1,\dots,g'_n)$ represent the same member of $L^0(\mu,\mathbb{T})^{m+n}$, then there exists $B\subseteq X$ such that $\mu(B)=0$, 
\[
f_i\upharpoonright (X\setminus B)=f'_i\upharpoonright (X\setminus B)
\]
for every $1\leq i\leq m$, and 
\[
g_j\upharpoonright (X\setminus B)=g'_j\upharpoonright (X\setminus B)
\]
for every $1\leq j\leq n$. Therefore, the value of 
\[
\rho_{m,n}(f_1,\dots,f_m,g_1,\dots,g_n)
\]
depends only on the class of $(f_1,\dots,f_m,g_1,\dots,g_n)$ in $L^0(\mu,\mathbb{T})^{m+n}$.

 Furthermore, when
\[
\rho_{m,n}(f_1,\dots,f_m,g_1,\dots,g_n)=0,
\]
we use the notation 
\[
f_1\cdots f_m\cap g_1\cdots g_n\approx\emptyset.
\]
\begin{lem}\label{rho_mn}
Let $\mu$ be a non-atomic Borel probability measure on a standard Borel space $X$, and 
\[
{(f_1,\dots,f_m,g_1,\dots,g_n)\in L^0(\mu,\mathbb{T})^{m+n}}.
\]
If 
\[
\rho_{m,n}(f_1,\dots,f_m,g_1,\dots,g_n)=0,
\]
then there exists $A\subseteq X$ with $\mu(A)=0$ such that
\[
f_1(X\setminus A)\cdots f_m(X\setminus A)\cap g_1(X\setminus A)\cdots g_n(X\setminus A)=\emptyset.
\]
\end{lem}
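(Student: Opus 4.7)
The plan is to realize the infimum in the definition of $\rho_{m,n}$ as an actual minimizer, constructed via Borel--Cantelli. Since $\rho_{m,n}(f_1,\dots,g_n)=0$, for each positive integer $k$ I would choose a Borel set $A_k\subseteq X$ with $\mu(A_k)<2^{-k}$ satisfying
\[
f_1(X\setminus A_k)\cdots f_m(X\setminus A_k)\cap g_1(X\setminus A_k)\cdots g_n(X\setminus A_k)=\emptyset.
\]
My candidate for the desired null set would then be
\[
A=\limsup_{k\to\infty} A_k=\bigcap_{N=1}^{\infty}\bigcup_{k\geq N} A_k,
\]
which satisfies $\mu(A)=0$ by Borel--Cantelli since $\sum_k\mu(A_k)<\infty$.

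To verify that $A$ has the required property, I would take an arbitrary tuple $(x_1,\dots,x_m,y_1,\dots,y_n)$ all of whose entries lie in $X\setminus A$. For each of the $m+n$ coordinates $z$, the condition $z\notin\limsup_k A_k$ means $z\in A_k$ for only finitely many $k$. Because there are only finitely many coordinates, a single index $k$ can be chosen large enough that every coordinate simultaneously lies in $X\setminus A_k$. Applying the emptiness property of that particular $A_k$ then forces $f_1(x_1)\cdots f_m(x_m)\neq g_1(y_1)\cdots g_n(y_n)$, which is precisely the conclusion of the lemma.

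The key observation making the approach work is the correct monotonicity in Definition \ref{disjointness}: enlarging the exceptional set $A$ only shrinks each image $f_i(X\setminus A)$ and $g_j(X\setminus A)$, so the property ``the intersection of products is empty'' is preserved upward in $A$. This monotonicity, together with the pigeonhole step over the $m+n$ coordinates, is exactly what allows a countable sequence of approximate minimizers to be aggregated into a single null minimizer, so I do not anticipate any serious obstacle.
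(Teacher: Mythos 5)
Your proof is correct and matches the paper's argument essentially verbatim: both choose $A_k$ with $\mu(A_k)\le 2^{-k}$ witnessing the infimum, set $A=\limsup_k A_k$, invoke Borel--Cantelli for $\mu(A)=0$, and for any fixed tuple outside $A$ pick a single $k$ with all $m+n$ coordinates outside $A_k$. Your closing remark about monotonicity in $A$ is true but not actually used in the argument (since $A=\limsup A_k$ is not a superset of any $A_k$); the finite-pigeonhole step alone carries the proof.
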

\begin{proof}
If 
\[
\rho_{m,n}(f_1,\dots,f_m,g_1,\dots,g_n)=0,
\]
then for every natural number $k$ we can find a Borel subset of $X$, $A_k$, such that $\mu(A_k)\leq \frac{1}{2^k}$ and
\[
f_1(X\setminus A_k)\cdots f_m(X\setminus A_k)\cap g_1(X\setminus A_k)\cdots g_n(X\setminus A_k)=\emptyset.
\]
Let 
\[
A=\bigcap_{i=1}^\infty\bigcup_{k\geq i}A_k.
\]
Then, $\mu(A)=0$ and for every $x_1,\dots,x_m,y_1,\dots,y_n\in X\setminus A$, there exists a natural number $k$ such that $x_1,\dots,x_m,y_1,\dots,y_n\in X\setminus A_k$. Thus,
\[
f_1(X\setminus A)\cdots f_m(X\setminus A)\cap g_1(X\setminus A)\cdots g_n(X\setminus A)=\emptyset.
\]
\end{proof}

\begin{thm}\label{T1}
Let $\mu$ be a non-atomic Borel probability measure on a standard Borel space $X$. Then, for every $m,n\in \mathbb{N}$, and sequences $1\leq k_1,\dots,k_p\leq m$, $1\leq l_1,\dots,l_q\leq n$
\[
\begin{split}
E=\{(f_1,\dots,f_m,g_1,\dots,g_n) &\in L^0(\mu,\mathbb{T})^{m+n} \ : \ f_{k_1}\cdots f_{k_p}\cap g_{l_1}\cdots g_{l_q}\approx\emptyset\}
\end{split}
\]
is comeager.
\end{thm}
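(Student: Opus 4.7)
My plan is to exhibit, for each $k\in\mathbb{N}$, an open dense subset $W_{1/k}$ of $\{\rho_{p,q}(f_{k_1},\dots,f_{k_p},g_{l_1},\dots,g_{l_q})<1/k\}$. Since $E$ is the intersection of these over $k$, it will then contain the comeager $G_\delta$ set $\bigcap_k W_{1/k}$. A direct attempt to show that $\{\rho_{p,q}<1/k\}$ itself is open fails, because a small $L^0$-perturbation of $(f_1,\dots,g_n)$ only nudges the image sets in $\mathbb{T}$ slightly; disjoint image sets can accumulate on each other, so their disjointness on $X\setminus A$ need not persist, i.e.\ $\rho_{p,q}$ is not upper semicontinuous. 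To fix this I quantify ``disjoint'' as ``separated by a positive distance.'' For $\epsilon,\delta>0$, set
\[
V_{\epsilon,\delta}=\Bigl\{(f_1,\dots,g_n):\exists A\subseteq X,\ \mu(A)<\epsilon,\ \inf_{\vec x,\vec y\in(X\setminus A)^{p+q}} d_\mathbb{T}\bigl(\textstyle\prod_r f_{k_r}(x_r),\prod_r g_{l_r}(y_r)\bigr)>\delta\Bigr\},
\]
and $W_\epsilon=\bigcup_{\delta>0}V_{\epsilon,\delta}$, which is contained in $\{\rho_{p,q}<\epsilon\}$.

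To show $V_{\epsilon,\delta}$ is open, fix $(f_1^0,\dots,g_n^0)\in V_{\epsilon,\delta}$ with witness $A_0$ and choose slack $\sigma>0$ so that $\mu(A_0)+\sigma<\epsilon$ and the infimum exceeds $\delta+\sigma$. Chebyshev's inequality implies that for any $(f_1,\dots,g_n)$ sufficiently close in $L^0$ there is a set $B\subseteq X$ with $\mu(B)<\sigma$ outside which each $f_i,g_j$ differs from $f_i^0,g_j^0$ pointwise by at most $\sigma/(2(p+q))$ in $\mathbb{T}$. Since $|z_1\cdots z_r-w_1\cdots w_r|\le\sum_i|z_i-w_i|$ for unit complex numbers, the distance between the corresponding products changes by at most $\sigma/2$, so on $X\setminus(A_0\cup B)$ it still exceeds $\delta$, and $\mu(A_0\cup B)<\epsilon$; thus $(f_1,\dots,g_n)\in V_{\epsilon,\delta}$, and $W_\epsilon$ is open.

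For density of $W_\epsilon$ in any basic open $U\subseteq L^0(\mu,\mathbb{T})^{m+n}$, approximate each coordinate by a step function on a common partition $\{P_1,\dots,P_N\}$, writing $f_i^\#=\sum_s\alpha_{i,s}\mathbf{1}_{P_s}$ and $g_j^\#=\sum_s\beta_{j,s}\mathbf{1}_{P_s}$ with $(f_1^\#,\dots,g_n^\#)\in U$. Then $f^\#_{k_1}(X)\cdots f^\#_{k_p}(X)$ and $g^\#_{l_1}(X)\cdots g^\#_{l_q}(X)$ are finite, consisting of the elements $\prod_r\alpha_{k_r,s_r}$ and $\prod_r\beta_{l_r,t_r}$ for $(s_r),(t_r)\in\{1,\dots,N\}^{p+q}$. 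For each fixed $(s_r),(t_r)$, the equation $\prod_r\alpha_{k_r,s_r}=\prod_r\beta_{l_r,t_r}$ is a non-trivial character relation on $\mathbb{T}^{(m+n)N}$ (the $\alpha$- and $\beta$-families are disjoint variables and some exponent is non-zero), so its solution set is a proper closed subgroup with empty interior. The union of the $N^{p+q}$ such subgroups still has empty interior, so we may perturb the values $(\alpha_{i,s},\beta_{j,s})$ into the complement while keeping $(f_1^\#,\dots,g_n^\#)\in U$. For the resulting step functions the two finite product sets are disjoint, hence separated by some $\delta>0$, placing the tuple in $V_{0,\delta}\subseteq V_{\epsilon,\delta}\subseteq W_\epsilon$.

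Combining the two parts gives $\bigcap_k W_{1/k}$ as a comeager $G_\delta$ subset of $E$. The key obstacle is the non-upper-semicontinuity of $\rho_{p,q}$; the quantitative upgrade to ``distance $>\delta$'' is what makes openness work, after which density reduces to a finite-dimensional genericity statement on $\mathbb{T}^{(m+n)N}$.
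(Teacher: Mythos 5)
Your proof is correct and follows essentially the same route as the paper's: approximate by finite step functions whose finite image products are disjoint (density), and show that the resulting positive separation is stable under small $L^0$ perturbations via a Chebyshev/telescoping estimate (openness), which is precisely the content of the paper's Lemma~\ref{lem1} and Proposition~\ref{P1}; packaging this as open dense sets $W_{\epsilon}=\bigcup_{\delta}V_{\epsilon,\delta}$ rather than as ``$E_k$ is nowhere dense'' is a cosmetic reorganization. The one small variation worth noting is in the density step: you perturb all step values $(\alpha_{i,s},\beta_{j,s})$ simultaneously on $\mathbb{T}^{(m+n)N}$ to avoid a finite union of proper character-relation subgroups, whereas the paper freezes the step functions $f$ and then chooses the last $g$ to dodge the resulting finite image set; your formulation treats repeated indices among $k_1,\dots,k_p$ and $l_1,\dots,l_q$ more uniformly, but the underlying mechanism is the same.
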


We need the following lemma and proposition to prove Theorem \ref{T1}. 
\begin{lem}\label{lem1}
Let $\mu$ be a non-atomic Borel probability measure on a standard Borel space $X$. Then, for every $m,n\in \mathbb{N}$, and sequences $1\leq k_1,\dots,k_p\leq m$, $1\leq l_1,\dots,l_q\leq n$
\begin{align*}
\{(f_1,\dots,f_m,g_1,\dots,g_n)\in L^0(&\mu,\mathbb{T})^{m+n} \ :  \ f_1,\dots,f_m,g_1,\dots,g_n \ \text{are }\\ &\text{finite step  functions and} \ f_{k_1}\cdots f_{k_p}\cap g_{l_1}\cdots g_{l_q}\approx\emptyset\}
\end{align*}
is dense.
\end{lem}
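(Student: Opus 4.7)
The plan is the standard two-step approximation argument: first replace each coordinate of the tuple by a finite step function close to it in $L^0(\mu,\mathbb{T})$, and then perturb the finitely many values of those step functions so that the resulting collection is multiplicatively independent in $\mathbb{T}$. Multiplicative independence rules out every non-trivial product relation among the new values, so the disjointness condition is then satisfied on the nose, with exceptional set $A=\emptyset$.

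More concretely, fix a basic open neighborhood $U_1\times\cdots\times U_m\times V_1\times\cdots\times V_n$ of a tuple $(f_1^0,\dots,f_m^0,g_1^0,\dots,g_n^0)$ in $L^0(\mu,\mathbb{T})^{m+n}$. By the standard density of finite step functions in $L^0(\mu,\mathbb{T})$, I would first produce step functions $\hat f_i\in U_i$ and $\hat g_j\in V_j$. Passing to a common refinement of the underlying partitions (and discarding null atoms), I may further assume that there is a single finite measurable partition $X=X_1\sqcup\cdots\sqcup X_N$ into positive-measure pieces such that $\hat f_i\equiv\alpha_{i,s}$ and $\hat g_j\equiv\beta_{j,s}$ on $X_s$ for some $\alpha_{i,s},\beta_{j,s}\in\mathbb{T}$.

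Next I would perturb these values to $\alpha'_{i,s},\beta'_{j,s}\in\mathbb{T}$, choosing the perturbations small enough that the resulting step functions $\tilde f_i$, $\tilde g_j$ (defined by the perturbed values on the same partition) still lie in the respective $U_i$, $V_j$; the $L^1$ displacement is bounded by the maximum pointwise displacement of values. The perturbations are to be chosen so that the full collection $\{\alpha'_{i,s}\}\cup\{\beta'_{j,s}\}\subseteq\mathbb{T}$ is multiplicatively independent. Such small perturbations exist because the multiplicatively independent tuples form a comeager subset of $\mathbb{T}^{(m+n)N}$: for each non-zero $\mathbf{n}\in\mathbb{Z}^{(m+n)N}$, the set $\{(z_i):\prod z_i^{n_i}=1\}$ is a proper closed subgroup of the connected compact group $\mathbb{T}^{(m+n)N}$, hence nowhere dense, and the exceptional set is the countable union of such subgroups.

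Granted multiplicative independence, any alleged equality $\alpha'_{k_1,s_1}\cdots\alpha'_{k_p,s_p}=\beta'_{l_1,t_1}\cdots\beta'_{l_q,t_q}$ would produce a non-trivial integer relation among the new values (the total exponent on the $\alpha'$ side is $p\geq 1$), contradicting independence. Hence $\tilde f_{k_1}(X)\cdots\tilde f_{k_p}(X)$ and $\tilde g_{l_1}(X)\cdots\tilde g_{l_q}(X)$ are disjoint subsets of $\mathbb{T}$, so $\rho_{p,q}(\tilde f_{k_1},\dots,\tilde g_{l_q})=0$ with $A=\emptyset$ as witness, i.e.\ $\tilde f_{k_1}\cdots\tilde f_{k_p}\cap\tilde g_{l_1}\cdots\tilde g_{l_q}\approx\emptyset$. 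The only delicate ingredient is the density of multiplicatively independent tuples in $\mathbb{T}^{(m+n)N}$; everything else is routine bookkeeping about common refinements and $L^1$ estimates for step functions.
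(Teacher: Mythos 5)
Your proof is correct, and it takes a somewhat different route from the paper's. The paper works out only the special case $(m,n)=(p,q)=(2,1)$ and leaves the rest to ``a similar argument'': there, one first fixes step-function approximants of the $f$'s, observes that $f_{k_1}(X)\cdots f_{k_p}(X)$ is then a finite subset of $\mathbb{T}$, and finally chooses a step-function approximant of the single $g$ whose values avoid that finite set. Your argument is symmetric rather than sequential: you put all $m+n$ functions on one common finite partition and perturb the entire $(m+n)N$-tuple of values at once to a multiplicatively independent family, using that such tuples form a dense (indeed comeager) subset of $\mathbb{T}^{(m+n)N}$, since each nontrivial integer relation cuts out a proper closed subgroup of the connected torus, hence a nowhere dense set. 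Your observation that any coincidence $\alpha'_{k_1,s_1}\cdots\alpha'_{k_p,s_p}=\beta'_{l_1,t_1}\cdots\beta'_{l_q,t_q}$ produces a nonzero exponent vector --- because the total exponent on the $\alpha$-side is $p\geq 1$ --- correctly covers the cases where indices $k_a$ or partition cells $s_a$ repeat. The payoff is that the general case needs no extra work: in the paper's scheme, for $q\geq 2$ one would have to arrange avoidance of the finite set $F$ not just by the $g$-values themselves but by all $q$-fold products of them, additional bookkeeping that your simultaneous-genericity step sidesteps. Both proofs rest on the same underlying facts (density of step functions, finiteness of their value sets, and the plentifulness of ``generic'' values in $\mathbb{T}$); yours packages them more uniformly.
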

\begin{proof}
We prove the lemma for $(m,n)=(p,q)=(2,1), \ k_1=l_1=1, \ k_2=2$ and the general statement follows with a similar argument. Set $\rho=\rho_{2,1}$ and let ${(f_0,g_0,h_0)\in L^0(\mu,\mathbb{T})^3}$. We can arbitrarily closely approximate $(f_0,g_0)$ with $(f,g)$ where $f,g$ are finite step functions. Since the range of $f$ and the range of $g$ are finite, $f(X)g(X)$ is finite. Therefore, we can find $h$ so that $h$ is a finite step function, $h$ is arbitrarily close to $h_0$, and $f(X)g(X)\cap h(X)=\emptyset$.
\end{proof}
\begin{prop}\label{P1}
Let $\mu$ be a non-atomic Borel probability measure on a standard Borel space $X$. Then, for every $k,m,n\in \mathbb{N}$, and sequences $1\leq k_1,\dots,k_p\leq m$, ${1\leq l_1,\dots,l_q\leq n}$
\begin{align*}
E_k=\{(f_1,\dots,f_m,g_1,\dots,g_n)\in L^0(\mu,&\mathbb{T})^{m+n} \ : \ \rho_{p,q}(f_{k_1},\dots,f_{k_p},g_{l_1},\dots,g_{l_q})\geq \frac{1}{k} \}
\end{align*}
is NWD.
\end{prop}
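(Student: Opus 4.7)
The plan is to prove $E_k$ is nowhere dense by showing that every basic open $U \subseteq L^0(\mu,\mathbb{T})^{m+n}$ contains a nonempty open $V$ with $V \cap E_k = \emptyset$.

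By Lemma \ref{lem1}, I would first pick a tuple $(f_1,\dots,f_m,g_1,\dots,g_n) \in U$ whose coordinates are finite step functions and which satisfies $f_{k_1}(X)\cdots f_{k_p}(X) \cap g_{l_1}(X)\cdots g_{l_q}(X) = \emptyset$; the construction in that lemma makes the disjointness literal rather than merely up to a null set, since the last step function is chosen to avoid a prescribed finite subset of $\mathbb{T}$ on the nose. As both products are finite disjoint subsets of $\mathbb{T}$, they are $\delta$-separated for some $\delta > 0$.

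Next, I would take $V$ to be a basic open neighborhood of $(f_1,\dots,g_n)$ of the form $\int|f'_i - f_i|\,d\mu,\ \int|g'_j - g_j|\,d\mu < \epsilon$ for all $i,j$, with $\epsilon$ so small that Markov's inequality applied with threshold $\delta/(4\max(p,q))$, together with a union bound over the $m+n$ coordinates, produces for every $(f'_1,\dots,g'_n) \in V$ an exceptional set $B \subseteq X$ with $\mu(B) < 1/k$ off of which each coordinate function is within $\delta/(4p)$ (respectively $\delta/(4q)$) of its unperturbed counterpart; shrinking $\epsilon$ further also ensures $V \subseteq U$. Using the elementary estimate $|x_1\cdots x_p - x'_1\cdots x'_p| \leq \sum_i |x_i - x'_i|$ valid on the unit circle, the perturbed products on $X\setminus B$ stay within $\delta/4$ of the unperturbed ones, so the two sets $f'_{k_1}(X\setminus B)\cdots f'_{k_p}(X\setminus B)$ and $g'_{l_1}(X\setminus B)\cdots g'_{l_q}(X\setminus B)$ remain $\delta/2$-separated and hence disjoint. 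Therefore $\rho_{p,q}(f'_{k_1},\dots,g'_{l_q}) \leq \mu(B) < 1/k$, whence $V \cap E_k = \emptyset$.

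The only point requiring care is the order of quantifiers: the step-function approximant is fixed first so that $\delta$ is well defined, and then $\epsilon$ is chosen as a function of $\delta$, $k$, $p$, $q$, and $m+n$. Converting $L^1$-closeness in the ambient topology on $L^0(\mu,\mathbb{T})$ to uniform closeness off a set of measure $< 1/k$ via Markov's inequality is routine, and the unit-modulus product estimate finishes the proof; I anticipate no serious obstacle.
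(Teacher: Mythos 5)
Your proposal is correct and follows essentially the same route as the paper: both pick a finite-step-function tuple in $U$ via Lemma \ref{lem1}, observe that the finite image sets are $\delta$-separated, use a Markov/Chebyshev estimate to convert $L^1$-closeness in $L^0(\mu,\mathbb{T})$ into uniform closeness off a small exceptional set, and finish with the telescoping unit-modulus product bound. The paper phrases the conclusion as continuity of $\rho$ at the step-function point; you phrase it directly as $V\cap E_k=\emptyset$, but the underlying estimate is identical.
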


\begin{proof}
We prove the proposition for $(m,n)=(p,q)=(2,1), \ k_1=l_1=1, \ k_2=2$ and the general statement follows with a similar argument. Set $\rho=\rho_{2,1}$ and let $U\subseteq L^0(\mu,\mathbb{T})^3$ be an arbitrary open subset. By Lemma \ref{lem1}, we can find finite step functions $(f_0,g_0,h_0)\in U$ such that ${\rho(f_0,g_0,h_0)=0}$. We show that $\rho$ is continuous at $(f_0,g_0,h_0)$, that is, for every $\epsilon>0$ there is an open neighborhood of $(f_0,g_0,h_0)$, $V_\epsilon$, such that for every $(f,g,h)\in V_\epsilon$ we have $\rho(f,g,h)<\epsilon$. 

Fix $\epsilon>0$, we define
\[
V_\epsilon=\{(f,g,h)\in L^0(\mu,\mathbb{T})^3 \ : \ \int{\Bigl(\left|f-f_0\right|+\left|g-g_0\right|+\left|h-h_0\right|\Bigr)} d\mu < \epsilon^2\}.
\]
Note that $V_\epsilon$ is an open subset of $L^0(\mu,\mathbb{T})^3$ and $\{V_{\frac{1}{n}}\}_{n=1}^\infty$ is a basis for open neighborhoods of $(f_0,g_0,h_0)$ in $L^0(\mu,\mathbb{T})^3$. For $(f,g,h)\in V_\epsilon$, we define
\begin{align*}
&A=\{x \ : \ \left|f(x)-f_0(x)\right|>\epsilon\},\\
&B=\{x \ : \ \left|g(x)-g_0(x)\right|>\epsilon\},\\
&C=\{x \ : \ \left|h(x)-h_0(x)\right|>\epsilon\}.
\end{align*}
Note that since $(f,g,h)\in V_\epsilon$, 
\[
\mu(A\cup B\cup C)<\epsilon.
\]
For $x,y,z\in X\setminus (A\cup B\cup C)$, assuming $\epsilon$ is small enough, we have
\begin{align*}
\left| f(x)g(y)-h(z)\right|&\geq \left|f_0(x)g_0(y)-h_0(z)\right| - \left|h(z)-h_0(z)\right| - \left| (f(x)-f_0(x))g(y)\right| -\\ & \ \ - \left|f_0(x)(g(y)-g_0(y))\right|\\ &\geq \left| f_0(x)g_0(y)-h_0(z)\right| - \left| f(x)-f_0(x)\right| -\left| g(y)-g_0(y)\right|-\\ & \ \ -\left| h(z)-h_0(z)\right|\\ &\geq \left| f_0(x)g_0(y)-h_0(z)\right|-3 \epsilon>0.
\end{align*}
Note that since $\rho(f_0,g_0,h_0)=0$, $f_0(x)g_0(y)$ and $h_0(z)$ are distinct. Hence, if $\epsilon$ is small enough, then
\[
\left| f_0(x)g_0(y)-h_0(z)\right|-3 \epsilon>0.
\]
Therefore,
\[
f\Bigl(X\setminus (A\cup B\cup C)\Bigr)g\Bigl(X\setminus (A\cup B\cup C)\Bigr)\cap h\Bigl(X\setminus (A\cup B\cup C)\Bigr)=\emptyset.
\]
Thus, putting the above equation and the fact that
\[
\mu(A\cup B\cup C)<\epsilon
\]
together we get that for every $(f,g,h)\in V_\epsilon$
\[
\rho(f,g,h)<\epsilon.
\]
Assuming $\epsilon$ is small enough, $V_\epsilon\subseteq U$ since $\{V_{\frac{1}{n}}\}_{n=1}^\infty$ is a basis for open neighborhoods of $(f_0,g_0,h_0)$, and $\rho$ is less than $\frac{1}{k}$ on $V_\epsilon$. Hence, $E_k$ is not dense in $U$. Since $U$ is an arbitrary open subset of $L^0(\mu,\mathbb{T})^3$, $E_k$ is NWD. 
\end{proof}
\begin{proof}[Proof of Theorem \ref{T1}]
Theorem follows from Proposition \ref{P1} since
\[
E^c=\bigcup_{k=1}^\infty E_k. \qedhere
\]
\end{proof}

It is noteworthy that a similar argument can be repeated to prove the following generalization of Theorem \ref{T1} for non-discrete Polish groups. 
Note that if a Polish space $G$ is non-discrete, then every non-empty open subset of $G$ is infinite (uncountable).

\begin{thm}
Let $\mu$ be a non-atomic Borel probability measure on a standard Borel space $X$ and $G$ be a non-discrete Polish group. Then, for every $m,n\in \mathbb{N}$, and sequences $1\leq k_1,\dots,k_p\leq m$, $1\leq l_1,\dots,l_q\leq n$
\[
\begin{split}
E=\{(f_1,\dots,f_m,g_1,\dots,g_n) &\in L^0(\mu,G)^{m+n} \ : \ f_{k_1}\cdots f_{k_p}\cap g_{l_1}\cdots g_{l_q}\approx\emptyset\}
\end{split}
\]
is comeager.
\end{thm}

In the following, we define a notion of disjointness for a function $f\in L^0(\mu,\mathbb{T})$.
\begin{defn}
Let $\kappa=(u_1,\dots,u_t)$, $\kappa '=(v_1,\dots,v_{t'})$ be tuples in $\mathbb{Z}\setminus \{0\}$, and $R\subseteq S_t$. Recall definition of $f^\kappa$ from (\ref{function}). We say $f^\kappa$ and $f^{\kappa '}$ are \textbf{almost $R$-disjoint} if there exists $A\subseteq X$ such that $\mu(A)=0$ and for all 
\[
x_1,\dots,x_t,y_1,\dots,y_{t'}\in X\setminus A
\]
where $x_1,\dots,x_t$ are pairwise distinct and $y_1,\dots,y_{t'}$ are pairwise distinct, we have
\begin{align*}
f^\kappa(x_1,\dots,x_t)=f^{\kappa '}(y_1,\dots,y_{t'})\Rightarrow (y_1,\dots,y_{t'})=r(x_1,\dots,x_t) \ \ \text{for some} \ r\in R.
\end{align*}
Note that if $t\neq t'$ (or $R=\emptyset$), then
\[
f^\kappa(x_1,\dots,x_t)\neq f^{\kappa '}(y_1,\dots,y_{t'})
\]
given that there is no $r\in S_t$ (or $r\in R$) such that
\[
(y_1,\dots,y_{t'})=r(x_1,\dots,x_t).
\]
In particular, if $\kappa=\kappa '$, we say $f^\kappa$ is \textbf{almost $R$-to-one}.
\end{defn}

\begin{thm}\label{T2}
Let $\mu$ be a non-atomic Borel probability measure on a standard Borel space $X$. Let $\kappa=(u_1,\dots,u_t)$, $\kappa '=(v_1,\dots,v_{t'})$ be tuples in $\mathbb{Z}\setminus \{0\}$ and
\[
R=\{r\in S_t \ : \ (v_1,\dots,v_{t'})=r(u_1,\dots,u_t)\}.
\]
Then, for comeagerly many $f\in L^0(\mu,\mathbb{T})$, $f^\kappa$ and $f^{\kappa '}$ are almost $R$-disjoint.
\end{thm}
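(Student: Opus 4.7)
The plan is to mirror the Baire-category structure of the proof of Theorem~\ref{T1} (Lemma~\ref{lem1} plus Proposition~\ref{P1}), adapted to a single function rather than a tuple. First I would reduce the statement by a \emph{coincidence pattern} analysis. For pairwise-distinct tuples $(x_1,\dots,x_t)$ and $(y_1,\dots,y_{t'})$, the data recording which $x_i$ equals which $y_j$ is captured by a triple $\mathcal{C}=(P,Q,\phi)$ with $P\subseteq\{1,\dots,t\}$, $Q\subseteq\{1,\dots,t'\}$, $|P|=|Q|$, and $\phi:P\to Q$ a bijection. In the coordinates $z_i=x_i=y_{\phi(i)}$ for $i\in P$, $x_i'=x_i$ for $i\notin P$, and $y_j'=y_j$ for $j\notin Q$, the equation $f^\kappa(x)=f^{\kappa'}(y)$ becomes
\[
\prod_{i\in P}f(z_i)^{u_i-v_{\phi(i)}}\prod_{i\notin P}f(x_i')^{u_i}\prod_{j\notin Q}f(y_j')^{-v_j}=1.
\]
The configurations with $P=\{1,\dots,t\}$, $Q=\{1,\dots,t'\}$, and $\phi^{-1}\in R$ give all zero exponents and are exactly the permitted cases $y=r(x)$; every other (non-trivial) configuration has at least one non-zero exponent. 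Since the number of configurations is finite, it suffices to prove that for each non-trivial $\mathcal{C}$ the quantity
\[
\rho^\mathcal{C}(f)=\inf\bigl\{\mu(A):\text{no solution with pairwise-distinct variables in }X\setminus A\bigr\}
\]
vanishes on a comeager set of $f$, and then combine the finitely many null sets by a Lemma~\ref{rho_mn}-style argument.

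For each non-trivial $\mathcal{C}$, after separating positive and negative exponents the equation has the Theorem~\ref{T1} form $\prod_l f^{c_l^+}(w_l)=\prod_l f^{c_l^-}(w_l')$, but with the functions on each side being specific powers of the single $f$ rather than an independent tuple. Theorem~\ref{T1} cannot be quoted directly because the map $f\mapsto(f^{c_1},\dots,f^{c_N})$ is not surjective onto $L^0(\mu,\mathbb{T})^N$ and a comeager property of tuples need not pull back to a comeager property of $f$; instead I would \emph{repeat} the argument of Proposition~\ref{P1}. For the density step (the analog of Lemma~\ref{lem1}), given non-empty open $U\subseteq L^0(\mu,\mathbb{T})$ I would produce $f_0\in U$ with $\rho^\mathcal{C}(f_0)=0$ as follows: using the Kuratowski--Mycielski theorem, which applies because the zero set in $\mathbb{T}^k$ of any non-trivial integer relation $\prod z_l^{c_l}=1$ is nowhere dense (so a countable union of such zero sets is meager in $\mathbb{T}^k$ for each $k$), I obtain a dense perfect set $M\subseteq\mathbb{T}$ all of whose finite subsets are multiplicatively independent over $\mathbb{Z}$. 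Then, approximating a representative of an arbitrary $f^*\in U$ by a step function and replacing each piece by a Borel injection into a small arc of $M$ near the corresponding step value, I obtain $f_0\in U$ that is a Borel injection into $M$. For such $f_0$, any pairwise-distinct inputs outside the null set where $f_0$ fails injectivity map to multiplicatively independent points of $M$, ruling out $\prod f_0(w_l)^{c_l}=1$ for any non-trivial $(c_l)$, hence $\rho^\mathcal{C}(f_0)=0$.

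The \emph{main obstacle} will be the semi-continuity step, the analog of the gap estimate $|f(x)g(y)-h(z)|\geq|f_0(x)g_0(y)-h_0(z)|-3\epsilon$ from Proposition~\ref{P1}. A na\"ive perturbation gives $|\prod f(w_l)^{c_l}-\prod f_0(w_l)^{c_l}|\leq C\delta$ on $\{|f-f_0|\leq\delta\}^N$, so every solution of the perturbed equation lies among the near-solutions $\{(w_l)\text{ distinct}:|\prod f_0(w_l)^{c_l}-1|\leq C\delta\}$; but such near-solutions form a subset of $X^N$ which does \emph{not} in general project onto a small subset of $X$ (for instance, in the case $(c_l)=(1,-1)$ they are a thin neighborhood of the diagonal, whose projection to either coordinate is all of $X$), so one cannot immediately conclude $\rho^\mathcal{C}(f)<\epsilon$. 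Resolving this will likely require either refining the density construction so that $f_0$ provides coordinate-wise control over near-solutions, or bypassing semi-continuity by a Kuratowski--Ulam-type Fubini argument on $L^0(\mu,\mathbb{T})\times X^N$ exploiting that, for each fixed distinct $(w_l)$, the section $\{f:\prod f(w_l)^{c_l}=1\}$ is the kernel of a continuous surjective homomorphism $L^0(\mu,\mathbb{T})\to\mathbb{T}$ and hence a nowhere-dense closed proper subgroup. Once the nowhere-density of $E^\mathcal{C}_n=\{f:\rho^\mathcal{C}(f)\geq 1/n\}$ is established for all $n$ and all non-trivial $\mathcal{C}$, the complement of the countable union is comeager and a diagonal null-set extraction yields Theorem~\ref{T2}.
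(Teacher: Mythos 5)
Your coincidence-pattern decomposition (the triples $\mathcal{C}=(P,Q,\phi)$) matches the paper's partition of $\Omega$ into the pieces $P_{\mathbf{i},\mathbf{j},r}$, and your observation that Theorem~\ref{T1} cannot be applied through $f\mapsto(f^{c_1},\dots,f^{c_N})$ is correct. The problem is that the plan built on top of this — showing that $E^{\mathcal{C}}_n=\{f:\rho^{\mathcal{C}}(f)\geq 1/n\}$ is nowhere dense — is not merely ``hard'' at the semi-continuity step, it is false. For the pattern $\mathcal{C}$ arising from $\kappa=\kappa'=(1)$ with $P=\emptyset$ the equation is $f(x_1)=f(y_1)$ with $x_1\neq y_1$, so $\rho^{\mathcal{C}}(f)=0$ means $f$ is essentially injective, while for every finite step function $f$ one has $\rho^{\mathcal{C}}(f)=1$: to kill all repeated values one must remove all but a single point from each step, i.e.\ a full-measure set. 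Since step functions are dense, every $E^{\mathcal{C}}_n$ with $n\geq 2$ is dense, so no choice of $f_0$ can make $E^{\mathcal{C}}_n$ fail to be dense near $f_0$. This also rules out your proposed fix (a): there is no way to refine $f_0$ so that $\rho^{\mathcal{C}}$ becomes upper semi-continuous at $f_0$, because nearby step functions are always present. Your proposed fix (b) also does not get off the ground, because for $f\in L^0(\mu,\mathbb{T})$ and a tuple $(w_l)$ of individual points the ``section'' $\{f:\prod_l f(w_l)^{c_l}=1\}$ is not well-defined: elements of $L^0$ are classes modulo null sets and point evaluation is meaningless, so the map $f\mapsto\prod_l f(w_l)^{c_l}$ is not a continuous homomorphism to $\mathbb{T}$ (or even a function) on $L^0(\mu,\mathbb{T})$.

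The idea you are missing, and which the paper uses, is a second decomposition \emph{inside each coincidence pattern}: further partition $P_{\mathbf{i},\mathbf{j},r}$ into countably many sets $C$ by confining the distinct coordinates to pairwise disjoint basic open sets $U_1,\dots,U_t,V_{l_1},\dots,V_{l_{t'-w}}$ (with the identifications $U_{i_k}=V_{j_{r(k)}}$ for matched coordinates). On such a $C$, the relevant values of $f$ are $f\restriction U_1,\dots,f\restriction U_t,f\restriction V_{l_1},\dots$, and the restriction map
\[
\Psi:L^0(\mu,\mathbb{T})\to\prod_i L^0\bigl(U_i,\tfrac{\mu}{\mu(U_i)},\mathbb{T}\bigr)\times\prod_j L^0\bigl(V_{l_j},\tfrac{\mu}{\mu(V_{l_j})},\mathbb{T}\bigr)
\]
is open, continuous, and hence category preserving \emph{because the opens are pairwise disjoint}. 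This makes the restrictions behave like independent coordinates, so Theorem~\ref{T1} really does apply in the target (which is isomorphic to $L^0(\mu,\mathbb{T})^{t+t'-w}$), and pulling back along $\Psi$ gives, for comeagerly many $f$, a null set $A_C$ with the required disjointness on $C$. There are only countably many $C$, so $\bigcup_C A_C$ is still null, giving the global null set of the theorem. Note this also clarifies why your diagonal example no longer bites: a pair $(x_1,y_1)$ near the diagonal but with $x_1\neq y_1$ is separated into \emph{disjoint} opens at some scale, and the comeagerness statement is phrased per such pair of opens, not as a single upper semi-continuous functional of $f$.
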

Note that in Theorem \ref{T2},  $R=\emptyset$ if $t\neq t'$. In general, $R$ could be any subgroup of $S_t$ moved by a fixed $r_0\in S_t$. For example, if $t=t'$ and all $u_i,v_j=1$, then $R=S_t$.
\begin{proof}
Let
\begin{align*}
\Omega=\{(\textbf{x},\textbf{y}) \in X^t\times X^{t'} \ : \ \text{coordinates of }&\textbf{x}\text{ are pairwise distinct and }\\ &\text{coordinates of }\textbf{y}\text{ are pairwise distinct}\}.
\end{align*}
Given $\textbf{x}=(x_1,\dots,x_t)$ and $\textbf{y}=(y_1,\dots,y_{t'})$ with $(\textbf{x},\textbf{y})\in \Omega$, there are unique $\textbf{i}=(i_1<\dots<i_w)$ and $\textbf{j}=(j_1<\dots<j_w)$ for some natural number $w$ such that
\begin{align*}
\{x_i \ : \ i\leq t\}\cap\{y_j \ : \ j\leq t'\}=\{x_{i_1},\dots,x_{i_w}\}=\{y_{j_1},\dots, y_{j_w}\}.
\end{align*}
Let $r\in S_w$ be the unique permutation such that
\[
(y_{j_1},\dots,y_{j_w})=r(x_{i_1},\dots,x_{i_w}),
\]
that is, $x_{i_k}=y_{j_{r(k)}}$ for every $1\leq k\leq w$.
Let $m(\textbf{x},\textbf{y})=(\textbf{i},\textbf{j},r)$. We define
\begin{align*}
P_{\textbf{i},\textbf{j},r}:=\{ (\textbf{x},\textbf{y}) \in \Omega \ : \ m(\textbf{x},\textbf{y})=(\textbf{i},\textbf{j},r) \}.
\end{align*}
Then,
\begin{align*}
\Omega=\bigcup_{\textbf{i},\textbf{j},r} P_{\textbf{i},\textbf{j},r}.
\end{align*}
Since there are finitely many such sets, it is enough to show that given $\textbf{i},\textbf{j},r$, for comeagerly many $f\in L^0(\mu,\mathbb{T})$ there exists $A\subseteq X$ with $\mu(A)=0$ such that for all $(\textbf{x},\textbf{y}) \in P_{\textbf{i},\textbf{j},r}$ where $x_i,y_j\in X\setminus A$ for every $1\leq i\leq t, 1\leq j\leq t'$, 
\[
\text{if} \ f^\kappa(x_1,\dots,x_t)=f^{\kappa '}(y_1,\dots,y_{t'}),\ \text{then} \ (y_1,\dots,y_{t'})=r(x_1,\dots,x_t).
\]

Fix $\textbf{i}=(i_1<\dots<i_w)$, $\textbf{j}=(j_1<\dots<j_w)$, and $r$. We have
\[
P_{\textbf{i},\textbf{j},r}=\bigcup_{U_1,\dots,U_t,V_1,\dots,V_{t'}} \{(\textbf{x},\textbf{y})\in \prod_{i=1}^{t} U_i\times\prod_{j=1}^{t'} V_j\ : \ m(\textbf{x},\textbf{y})=(\textbf{i},\textbf{j},r)\},
\]
where $U_i$, $1\leq i\leq t$, $V_j$, $j\in\{1,\dots,t'\}\setminus \{j_1,\dots,j_w\}$, are pairwise disjoint basic open subsets of $X$ and $U_{i_k}=V_{j_{r(k)}}$ for every $1\leq k\leq w$. Since there are only countably many different choices for $U_1,\dots,U_t,V_1,\dots,V_{t'}$, it is enough to show that if
\[
C=\{(\textbf{x},\textbf{y})\in \prod_{i=1}^{t} U_i\times\prod_{j=1}^{t'} V_j \ : \ m(\textbf{x},\textbf{y})=(\textbf{i},\textbf{j},r)\},
\]
then for comeagerly many $f\in L^0(\mu,\mathbb{T})$ there exists $A\subseteq X$ with $\mu(A)=0$ such that for all $(\textbf{x},\textbf{y}) \in C$ where $x_i,y_j\in X\setminus A$ for every $1\leq i\leq t, 1\leq j\leq t'$, 
\[
\text{if} \ f^\kappa(x_1,\dots,x_t)=f^{\kappa '}(y_1,\dots,y_{t'}),\ \text{then} \ (y_1,\dots,y_{t'})=r(x_1,\dots,x_t).
\]

Fix $C$ with sequences $U_i$, $1\leq i\leq t$, $V_j$, $1\leq j\leq t'$, as above. We may assume that $U_i$, $V_j$, $1\leq i\leq t$, $1\leq j\leq t'$, have non-zero measure since otherwise, we can take $A$ to be the union of all basic open subsets of $X$ with measure $0$. In this case, there is no $(\textbf{x},\textbf{y}) \in C$ with $x_i,y_j\in X\setminus A$ for every $1\leq i\leq t, 1\leq j\leq t'$.

 Consider the map 
\[
\Psi:L^0(\mu,\mathbb{T})\rightarrow \prod_{i=1}^t{L^0(U_i,\frac{\mu}{\mu(U_i)},\mathbb{T})}\times\prod_{j=1}^{t'-w}{L^0(V_{l_j},\frac{\mu}{\mu(V_{l_j})},\mathbb{T})}
\]
defined by
\begin{align*}
\Psi(f)=(f\upharpoonright U_1,\dots,f\upharpoonright U_t,f\upharpoonright &V_{l_1},\dots,f\upharpoonright V_{l_{t'-w}})
\end{align*}
where
\[
\{l_1,\dots,l_{t'-w}\}=\{1,\dots,t'\}\setminus \{j_1,\dots,j_w\}.
\]
The map $\Psi$ is open and continuous since for each open subset $U\subseteq X$ with $\mu(U)>0$, 
\[
f\rightarrow f\upharpoonright U
\]
is open and continuous, and $U_i$, $V_j$, $1\leq i\leq t$, $j\in\{1,\dots,t'\}\setminus \{j_1,\dots,j_w\}$, are pairwise disjoint. Therefore, $\Psi$ is category preserving. 

Let
\[
\Psi(f)=(f_1,\dots,f_t,g_{l_1},\dots,g_{l_{t'-w}})
\]
and
\[
(x_1,\dots,x_t,y_1,\dots,y_{t'})\in C.
\]
We have
\begin{align*}
&f^\kappa(x_1,\dots,x_t)=f_1(x_1)^{u_1}f_2(x_2)^{u_2}\cdots f_t(x_t)^{u_t},\\
&f^{\kappa '}(y_1,\dots,y_{t'})=\prod_{k=1}^{t'-w} g_{l_k}(y_{l_k})^{v_{l_k}}\cdot \prod_{k=1}^w f_{i_{r^{-1}(k)}}(x_{i_{r^{-1}(k)}})^{v_{j_k}}.
\end{align*}
If $w\neq t$ or in the case of $w=t$, $(v_1,\dots,v_{t'})\neq r(u_1,u_2,\dots,u_t)$, then by Theorem \ref{T1} and the fact that $\Psi$ is category preserving, for comeagerly many $f\in L^{0}(\mu,\mathbb{T})$
\begin{equation}\label{lastchapter3}
f_1^{u_1}f_2^{u_2}\cdots f_t^{u_t}\cap \prod_{k=1}^{t'-w} g_{l_k}^{v_{l_k}}\cdot \prod_{k=1}^w f_{i_{r^{-1}(k)}}^{v_{j_k}}\approx \emptyset.
\end{equation}
Note that since $U_i$, $1\leq i\leq t$, $V_j$, $1\leq j\leq t'$, have non-zero measures, $L^0(\frac{\mu}{\mu(U_i)},\mathbb{T})$, $1\leq i\leq t$, and $L^0(\frac{\mu}{\mu(V_j)},\mathbb{T})$, $1\leq j\leq t'$, are isomorphic to $L^0(\mu,\mathbb{T})$ as topological groups. Therefore, considering that $f\to f^{-1}$ is a category preserving map on $L^0(\mu,\mathbb{T})$, Theorem \ref{T1} can be applied to
\[
\prod_{i=1}^t{L^0(\frac{\mu}{\mu(U_i)},\mathbb{T})}\times\prod_{j=1}^{t'-w}{L^0(\frac{\mu}{\mu(V_{l_j})},\mathbb{T})}\cong L^0(\mu,\mathbb{T})^{t+t'-w}.
\]
to obtain (\ref{lastchapter3}). By Lemma \ref{rho_mn}, there exists $A\subseteq X$ with $\mu(A)=0$ such that for all $(\textbf{x},\textbf{y}) \in C$ where $x_i,y_j\in X\setminus A$ for every $1\leq i\leq t, 1\leq j\leq t'$,
\[
f^\kappa(x_1,\dots,x_t)=f^{\kappa '}(y_1,\dots,y_{t'})\Rightarrow (y_1,\dots,y_{t'})=r(x_1,\dots,x_t). \qedhere
\]
\end{proof}

With a similar argument one can prove the following generalization of Theorem \ref{T2} for non-discrete Polish groups.

\begin{thm}
Let $\mu$ be a non-atomic Borel probability measure on a standard Borel space $X$ and $G$ be a non-discrete Polish group. Let $\kappa=(u_1,\dots,u_t)$, $\kappa '=(v_1,\dots,v_{t'})$ be tuples in $\mathbb{Z}\setminus \{0\}$ and
\[
R=\{r\in S_t \ : \ (v_1,\dots,v_{t'})=r(u_1,\dots,u_t)\}.
\]
Then, for comeagerly many $f\in L^0(\mu,G)$, $f^\kappa$ and $f^{\kappa '}$ are almost $R$-disjoint.
\end{thm}

\bigskip\noindent

\section{DL--condition}
Generic behavior of a measure preserving transformation is of interest in Ergodic Theory. For instance, the following papers are devoted to study generic behavior of a measure preserving transformation: Del Junco--Lema\'nczyk [\ref{DL_B}], Glasner--Weiss [\ref{GW_B}], King [\ref{King_B}]. Of particular interest is characterization of $\overline{\langle T\rangle}$ for a generic $T\in L^0(\mu,\mathbb{T})$. More precisely, does there exists a topological group $G$ such that $\overline{\langle T\rangle}$ is isomorphic to $G$ for a generic $T\in L^0(\mu,\mathbb{T})$. The following question is due to Glasner and Weiss.

\begin{ques}[Glasner--Weiss]\label{q1}
Let $\mu$ be a non-atomic Borel probability measure on $X$. Is it true that for a generic $T\in \operatorname{Aut}(\mu)$
\[
\overline{\langle T\rangle}\cong L^0(\mu,\mathbb{T})?
\]
\end{ques}
Del Junco and Lema\'nczyk [\ref{DL_B}] proved a generic property of measure preserving transformations. They showed that for a generic $T\in \operatorname{Aut}(\mu)$, maximal spectral types of powers of $T$ satisfy certain orthogonality conditions.
\begin{thm}[Del Junco--Lema\'nczyk]\label{DL}
	There is a dense $G_\delta$ subset $G\subseteq \operatorname{Aut}(\mu)$ such that, for each $T\in G$ and $k(1), k(2), \dots, k(l)\in \mathbb{Z}^+$, $k'(1), k'(2), \dots, k'(l')\in \mathbb{Z}^+$, the convolutions
	\[
	\sigma_{T^{k(1)}}\ast\cdots\ast\sigma_{T^{k(l)}} \ \text{and} \ \sigma_{T^{k'(1)}}\ast\cdots\ast\sigma_{T^{k'(l')}}
	\]
	are mutually singular, provided that there does not exist $r\in S_l$ such that 
	\[
	(k'(1), k'(2), \dots, k'(l'))=r(k(1), k(2), \dots, k(l)).
	\]
	\end{thm}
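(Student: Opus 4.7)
The plan is to apply the Baire category theorem after reducing to a countable collection of dense $G_\delta$ sets. For each pair of tuples $\kappa = (k(1),\dots,k(l))$ and $\kappa' = (k'(1),\dots,k'(l'))$ of positive integers with $\kappa'$ not a rearrangement of $\kappa$, let $G_{\kappa,\kappa'}$ denote the set of $T \in \operatorname{Aut}(\mu)$ for which $\sigma_{T^{k(1)}}\ast\cdots\ast\sigma_{T^{k(l)}}$ and $\sigma_{T^{k'(1)}}\ast\cdots\ast\sigma_{T^{k'(l')}}$ are mutually singular. Since the collection of such pairs is countable, $G = \bigcap_{\kappa,\kappa'} G_{\kappa,\kappa'}$ will be the desired set, provided each $G_{\kappa,\kappa'}$ is dense $G_\delta$.

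To show the $G_\delta$ structure, I would exploit continuity of spectral measures through their Fourier coefficients: for $x \in L^2(\mu)$, the map $T \mapsto \widehat{\sigma}_{T,x}(n) = \langle T^n x, x\rangle$ is continuous in the weak topology, and the maximal spectral type can be reconstructed (up to equivalence) from countably many $\sigma_{T,x_n}$ with $\{x_n\}$ dense in $L^2(\mu)$. Mutual singularity $\nu_1 \perp \nu_2$ of Borel probability measures on $\mathbb{T}$ is itself a $G_\delta$ condition: it is equivalent to the existence, for every rational $\epsilon > 0$, of a finite union $U$ of arcs (from a fixed countable basis of $\mathbb{T}$) with $\nu_1(U) > 1-\epsilon$ and $\nu_2(U) < \epsilon$; each such inner condition translates via Fourier expansion into an open condition on $T$. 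Combining these ingredients expresses $G_{\kappa,\kappa'}$ as a countable intersection of open sets.

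For density of $G_{\kappa,\kappa'}$, I would invoke the fact that mutual singularity of spectral measures is conjugacy invariant, together with Halmos's theorem that the conjugacy class of every aperiodic $T$ is dense in $(\operatorname{Aut}(\mu), w)$. Hence it suffices to produce a single aperiodic witness $T_0$ satisfying the full countable family of conditions simultaneously. A natural candidate is a Gaussian automorphism associated with a Borel measure $\nu_0$ on $\mathbb{T}$ supported on a rationally independent (Kronecker-type) set: then the convolutions of the pushforwards $\tau_{k(i)}\nu_0$ for distinct multisets $\{k(i)\}$ concentrate on disjoint arithmetic supports, and this separation propagates through the Gaussian formula to the maximal spectral types of $T_0^k$ and their convolutions. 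Alternatively, a rank-one Chac\'on-type construction with carefully chosen spacers gives an analogous separation via its Riesz-product spectrum.

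The main obstacle is exactly this third step: verifying simultaneous mutual singularity across all countably many pairs $(\kappa,\kappa')$ requires uniform control of spectral supports across arbitrarily long convolutions, and this is where the bulk of the original Del Junco--Lema\'nczyk argument resides. Once such a witness is in hand, Halmos's density theorem, conjugacy invariance, and Baire category combine cleanly to deliver the full statement.
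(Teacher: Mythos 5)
This statement is not proved in the paper at all; it is quoted directly from Del Junco and Lema\'nczyk [\ref{DL_B}] and used as a black box to motivate the DL-condition, so there is no in-paper argument to compare your attempt against --- your proposal has to stand on its own.

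On its own terms, your outline reproduces the standard genericity template (countable reduction to pairs $(\kappa,\kappa')$, a $G_\delta$ structure argument, and density via conjugacy invariance together with Halmos's theorem on dense conjugacy classes), which is the right overall shape. But the third step, which you yourself flag as the main obstacle, is where essentially all the mathematical content lives, and you do not close it. Neither of your candidate witnesses is obviously correct. For a Gaussian automorphism built from a measure $\nu_0$, the maximal spectral type of $T^k$ is not the pushforward $(\tau_k)_*\nu_0$ but a normalized sum of all convolution powers of $(\tau_k)_*\nu_0$ coming from the symmetric Fock decomposition, so disjointness of ``arithmetic supports'' of the pushforwards does not transparently propagate to the convolutions in the theorem; verifying that it does is precisely the hard part. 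For Chac\'on-type rank-one constructions the maximal spectral type is a generalized Riesz product whose behavior under passage to powers and under convolution is itself delicate and is the subject of substantial technical work. Asserting ``this separation propagates through the Gaussian formula'' or that the Riesz-product spectrum gives ``an analogous separation'' simply restates what has to be proved. There is also a secondary soft spot in your $G_\delta$ step: $\sigma_{T^k}$ is a maximal spectral type, defined only up to equivalence and not continuous in $T$; you must reduce to the cyclic measures $\sigma_{T^k,x_n}$ for a fixed dense sequence $(x_n)$, check that singularity across all the associated convolution products captures singularity of the maximal types, and verify that, with $T\mapsto\langle T^n x,x\rangle$ as the only continuous handle, the resulting countable intersection is genuinely $G_\delta$ --- the sketch gestures at this without managing the quantifiers. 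As written the proposal is a plausible roadmap, not a proof.
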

Assuming the answer to Question \ref{q1} is positive, for a generic $T\in \operatorname{Aut}(\mu)$, we can define a continuous unitary representation of $L^0(\mu,\mathbb{T})$ by 
\[
\Psi:L^0(\mu,\mathbb{T}) \cong \overline{\langle T\rangle} \hookrightarrow \operatorname{Aut}(\mu)\subseteq \mathcal{U}(L^2(X,\mu)).
\]
Note that $\operatorname{Aut}(\mu)$ can be viewed as a closed subset of $\mathcal{U}(L^2(X,\mu))$ by identifying ${T\in \operatorname{Aut}(\mu)}$ with $U_T\in \mathcal{U}(H)$ where
\[
 U_T(f)=f\circ T^{-1}.
\]
Hence, the orthogonality conditions from Theorem \ref{DL} motivates the following orthogonality conditions for a continuous unitary representation of $L^0(\mu,\mathbb{T})$.
\begin{defn}
   Fix a non-atomic Borel probability measure $\mu$ on a standard Borel space $X$. We say that a continuous unitary representation of $L^0(\mu,\mathbb{T})$,
\[
\Phi:L^0(\mu,\mathbb{T})\rightarrow \mathcal{U}(H)
\]
satisfies the \textbf{DL--condition} if there is a dense $G_\delta$ subset $G\subseteq L^0(\mu,\mathbb{T})$ such that, for each $f\in G$ and $k(1), k(2), \dots, k(l)\in \mathbb{Z}^+$, $k'(1), k'(2), \dots, k'(l')\in \mathbb{Z}^+$, the convolutions
	\[
	\sigma_{\Phi(f)^{k(1)}}\ast\cdots\ast\sigma_{\Phi(f)^{k(l)}} \ \text{and} \ \sigma_{\Phi(f)^{k'(1)}}\ast\cdots\ast\sigma_{\Phi(f)^{k'(l')}}
	\]
	are mutually singular, provided that there does not exist $r\in S_l$ such that 
	\[
	(k'(1), k'(2), \dots, k'(l'))=r(k(1), k(2), \dots, k(l)).
	\]
\end{defn}
Solecki [\ref{S1_B}] showed that a continuous unitary representation of $L^0(\mu,\mathbb{T})$ can be written as a direct sum of unitary representations of $L^0(\mu,\mathbb{T})$ of the following form: Assume that we are given a sequence $\kappa=(k(1),k(2),\dots,k(n))$ of elements of $\mathbb{Z}\setminus \{0\}$ with
\[
k(1)\leq k(2)\leq\cdots\leq k(n).
\]
Assume we have a finite Borel measure $\lambda$ on $X^n$ whose marginal measures are absolutely continuous with respect to $\mu$, that is, for $i\leq n$
\begin{equation}\label{A1}
 (\pi_i)_*(\lambda)\ll\mu 
\end{equation}
where $\pi_i$ is the projection on the $i-$th coordinate. With this set of data we associate the following representation of $L^0(\mu,\mathbb{T})$ on $L^2(\lambda,\mathbb{C})$
\[
L^0(\mu,\mathbb{T})\ni f\rightarrow U_f\in \mathcal{U}(L^2(\lambda,\mathbb{C}))
\]
where for $h\in L^2(\lambda,\mathbb{C})$
\[
U_f(h)=(\prod_{i\leq n} (f \circ\pi_i)^{k(i)}) h.
\]
This representation is denoted by $\sigma(\kappa,\lambda)$. Furthermore, we consider the following additional condition on a finite measure $\lambda$ as above, for $1\leq i<j\leq n$
\begin{equation}\label{A2}
\lambda(\{(x_1,x_2,\dots,x_n)\in X^n \ : \ x_i=x_j\})=0 .
\end{equation}
Let $S$ be the set of all sequences $\kappa=(k(1),k(2),\dots,k(n))$ of elements of $\mathbb{Z}\setminus \{0\}$ such that $k(1)\leq k(2)\leq\cdots\leq k(n)$. The natural number $n$ is called the length of $\kappa$ and we denote it by $\left|\kappa\right|$.
\begin{thm}[Solecki]\label{Theorem_S}
Let $\Phi$ be a continuous unitary representation of $L^0(\mu,\mathbb{T})$ on a separable complex Hilbert space $H$. Consider $H_0$, the orthogonal complement of
\[
\{v\in H \ : \ \forall f\in L^0(\mu,\mathbb{T}) \ \Phi(f)(v)=v \}.
\]
For $\kappa\in S$ and $i\in \mathbb{N}$ there exist finite Borel measures $\lambda_\kappa^i$ on $X^{|\kappa|}$ with properties \ref{A1}, \ref{A2}, and 
\begin{equation}\label{A3}
\lambda_\kappa^j\ll \lambda_\kappa^i \ \text{for} \ i<j
\end{equation}
such that the representation $\Phi$ restricted to $H_0$ is the direct sum of the representations $\sigma(\kappa,\lambda_\kappa^i)$ with $\kappa\in S$ and $i\in \mathbb{N}$. 
\end{thm}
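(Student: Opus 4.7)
The plan is to reduce to cyclic representations and classify them. Given $v\in H_0$, the continuous positive definite function $\varphi_v(f) = \langle \Phi(f)v, v\rangle$ determines the cyclic subspace $\overline{\operatorname{span}\{\Phi(f)v:f\in L^0(\mu,\mathbb{T})\}}$ up to unitary equivalence, so it suffices to show that every such $\varphi_v$ decomposes as a countable sum of positive definite functions associated to representations of the form $\sigma(\kappa,\lambda)$. Applied to the cyclic vector $\mathbf{1}\in L^2(\lambda,\mathbb{C})$, the representation $\sigma(\kappa,\lambda)$ produces the positive definite function
\[
f\mapsto \int_{X^{|\kappa|}}\prod_{i\le |\kappa|} f(x_i)^{k(i)}\,d\lambda(x_1,\dots,x_{|\kappa|}),
\]
so the heart of the problem is to show that every continuous positive definite function on $L^0(\mu,\mathbb{T})$ is a countable sum of such ``monomial integrals''.

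To extract the exponent tuples $\kappa$, I would exploit the subgroup structure of $L^0(\mu,\mathbb{T})$. For any finite measurable partition $\mathcal{P}=\{P_1,\dots,P_m\}$ of $X$, the subgroup $G_\mathcal{P}$ of functions constant on each $P_j$ is a copy of the compact abelian group $\mathbb{T}^m$. Restricting $\Phi$ to $G_\mathcal{P}$ and applying Peter--Weyl, $H$ splits into isotypic components indexed by characters $(k_1,\dots,k_m)\in\mathbb{Z}^m$. Refining $\mathcal{P}$ along a countable dense subalgebra of $\operatorname{MALG}_\mu$ and applying a martingale-type convergence argument on the nested family of decompositions should then assemble, for each sequence length $n$, a finite Borel measure on $X^n$; after sorting coordinates this yields a family of measures labeled by elements of $S$.

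From these measures I would extract the multiplicity chain $\lambda^1_\kappa\gg\lambda^2_\kappa\gg\cdots$ satisfying (\ref{A3}) by invoking the spectral multiplicity theorem for the abelian von Neumann algebra generated by $\Phi(L^0(\mu,\mathbb{T}))$ on the $\kappa$-graded piece. Marginal absolute continuity (\ref{A1}) should follow by testing $\Phi$ on the ``indicator'' subgroups $\{f_{A,t}:t\in\mathbb{R}\}$, where $f_{A,t}$ equals $e^{it}$ on $A$ and $1$ on $X\setminus A$: as $\mu(A)\to 0$, the function $f_{A,t}$ tends to $\mathbf{1}$ in measure, so by continuity of $\Phi$ its image tends to the identity in the strong operator topology, and this translates into the marginal of $\lambda^i_\kappa$ charging only sets of positive $\mu$-measure. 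For the diagonal-vanishing (\ref{A2}), I would arrange the labels $\kappa$ to be minimal: any mass that $\lambda^i_\kappa$ placed on $\{x_i=x_j\}$ would describe an action that actually factors through a shorter tuple with combined exponent $k(i)+k(j)$, and such mass should therefore be reassigned to the appropriate $\lambda^{i'}_{\kappa'}$ with $|\kappa'|<|\kappa|$.

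The main obstacle will be making the partition-refinement limit rigorous: one must show that the finite-dimensional Peter--Weyl data, as $\mathcal{P}$ is refined, really assembles into an honest countably additive Borel measure on $X^n$ rather than merely a projective system, and that the resulting measure has the correct marginal regularity. Once that is in place, reconstructing $\Phi\restriction H_0$ as $\bigoplus_{\kappa,i}\sigma(\kappa,\lambda^i_\kappa)$ is routine: both representations give rise to the same continuous positive definite function on a countable dense subgroup (for instance the finite step functions with values in the group of roots of unity), and continuity then forces agreement on all of $L^0(\mu,\mathbb{T})$, whence matching cyclic vectors yields the unitary equivalence.
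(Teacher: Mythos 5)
This theorem is cited from Solecki [\ref{S1_B}, Theorem 2.1] and stated in the paper without proof; there is therefore no in-paper argument to compare your sketch against, and I can only assess it on its own terms.

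Your outline is pointed in a sensible direction, but it has two genuine gaps. The first is in the reduction: decomposing the positive definite function $\varphi_v$ as a countable sum $\varphi_v=\sum_i\psi_i$, with each $\psi_i$ the positive definite function of the canonical cyclic vector of some $\sigma(\kappa_i,\lambda_i)$, only exhibits the cyclic representation generated by $v$ as a \emph{sub}representation of $\bigoplus_i\sigma(\kappa_i,\lambda_i)$, not as that direct sum. To upgrade this you need to know that the class of representations $\sigma(\kappa,\lambda)$ (with sorted $\kappa$ and $\lambda$ satisfying (\ref{A1}), (\ref{A2})) is closed under passing to subrepresentations and that distinct $\kappa$ yield mutually disjoint representations; and you also need a separate exhaustion argument to cover all of $H_0$ rather than a single cyclic subspace. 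These disjointness facts are precisely what conditions (\ref{A2}) and (\ref{A3}) are engineered to encode, so invoking a ``spectral multiplicity theorem'' and a ``reassignment of diagonal mass'' to produce them is circular until the disjointness of the building blocks has been established independently.

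The second gap you flag yourself but do not close, and it is the crux: turning the Peter--Weyl data along refining partitions into honest countably additive Borel measures on $X^{|\kappa|}$ with the stated marginal regularity. This is not a routine martingale limit. Under a refinement $\mathcal{P}'$ of $\mathcal{P}$, a single character $(k_1,\dots,k_m)$ of the torus $\mathbb{T}^{\mathcal{P}}$ lifts to the infinite family of characters of $\mathbb{T}^{\mathcal{P}'}$ whose block sums reproduce the $k_i$; the resulting projective system lives over finitely supported integer-valued configurations on $X$, not over a fixed $X^n$, and one must prove tightness, reorganize by sorting multisets of exponents into $S$, and verify (\ref{A1}) before the measures $\lambda^i_\kappa$ even exist. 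Until that is done the sketch describes a strategy, not a proof; for the actual argument one has to go to Solecki's paper.
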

Furthermore, these measures, $\lambda_\kappa^i$, can be chosen such that if $\kappa=(k(1),\dots,k(n)),\break m\in \mathbb{N},$ and $1\leq i<j\leq n$ with $k(i)=k(j)$, then
\begin{equation}\label{measure}
\lambda_\kappa^m\{(x_1,x_2,\dots,x_n)\in X^n \ : \ x_j<_X x_i\}=0.
\end{equation}
Here $<_X$ is a linear order on $X$ with the property that the order topology it generates is compact, second countable and the Borel sets with
respect to this topology coincide with the Borel sets on $X$. Assuming (\ref{measure}), measures $\lambda_\kappa^i$ obtained from Theorem \ref{Theorem_S} are unique up to mutual absolute continuity.

In the following, we show that the DL--condition for a continuous representation of $L^0({\mu,\mathbb{T}})$ is equivalent to certain orthogonality conditions on the measures, $\lambda_\kappa^1$.

\begin{thm}\label{MT}
Let $\Phi=\bigoplus\sigma(\kappa,\lambda_\kappa^i)$ be a continuous unitary representation of $L^0(\mu,\mathbb{T})$. Then, $\Phi$ satisfies the DL--condition iff we have
\begin{equation}\label{*}
\lambda_{\kappa_1}^1\times\lambda_{\kappa_2}^1\times\cdots\times\lambda_{\kappa_l}^1 \perp r(\lambda_{\kappa_{1}'}^1\times\lambda_{\kappa_{2}'}^1\times\dots\times\lambda_{\kappa_{l'}'}^1)
\end{equation}
for every $(\kappa_1, \kappa_2, \dots, \kappa_l)$, $(\kappa_1', \kappa_2', \dots, \kappa_{l'}')$, and $r\in \operatorname{S}_t$ such that
\[
k(1)\kappa_1+k(2)\kappa_2+\cdots+k(l)\kappa_l= r\Big(k'(1)\kappa_{1}'+k'(2)\kappa_{2}'+\cdots+k'(l')\kappa_{l'}'\Big) 
\]
for some non-zero integer numbers $(k(1), k(2), \dots, k(l))$ and $(k'(1), k'(2), \dots, k'(l'))$, provided that there does not exist $s\in S_l$ such that 
	\[
	(k'(1), k'(2), \dots, k'(l'))=s(k(1), k(2), \dots, k(l)).
	\]
\end{thm}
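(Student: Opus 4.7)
The plan is to pass from the DL-condition to pairwise singularity of explicit pushforward measures, and then analyse such pairs using Theorem~\ref{T2} together with a joining argument. On each summand $\sigma(\kappa,\lambda_\kappa^i)$, the operator $\Phi(f)^k$ is multiplication by $f^{k\kappa}$, with maximal spectral type equivalent to $(f^{k\kappa})_*\lambda_\kappa^i$; summing over $i$ using (\ref{A3}) gives $\sigma_{\Phi(f)^k}\sim\sum_\kappa (f^{k\kappa})_*\lambda_\kappa^1$. Expanding the convolutions and using that pushforwards commute with products, I would rewrite
\[
\sigma_{\Phi(f)^{k(1)}}\ast\cdots\ast\sigma_{\Phi(f)^{k(l)}}\sim\sum_{(\kappa_1,\dots,\kappa_l)}(f^{\tilde\kappa})_*\Lambda_{(\kappa_i)}
\]
where $\tilde\kappa = k(1)\kappa_1+\cdots+k(l)\kappa_l$ and $\Lambda_{(\kappa_i)} := \lambda_{\kappa_1}^1\times\cdots\times\lambda_{\kappa_l}^1$. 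A routine countable-additivity argument then reduces the DL-condition to the pairwise singularity
\begin{equation}\label{pairwise}
(f^{\tilde\kappa})_*\Lambda_{(\kappa_i)} \perp (f^{\tilde{\kappa'}})_*\Lambda'_{(\kappa_j')}
\end{equation}
for every admissible choice (with $(k)$ not a rearrangement of $(k')$), generically in $f$. One preparatory observation I would record: by (\ref{A1}), (\ref{A2}), and non-atomicity of $\mu$, each $\Lambda_{(\kappa_i)}$ is automatically concentrated on tuples with pairwise distinct coordinates — coordinates inside a single factor are distinct by (\ref{A2}), while coordinates across distinct factors live in a joint distribution absolutely continuous with respect to $\mu\otimes\mu$, on which the diagonal is null.

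\textbf{Forward direction.} Suppose (\ref{*}) fails: for some tuples and some $r\in S_t$ with $\tilde\kappa = r(\tilde{\kappa'})$, there is $\nu\neq 0$ with $\nu\leq\Lambda := \Lambda_{(\kappa_i)}$ and $\nu\leq r(\Lambda_{(\kappa_j')})$. A direct computation using Definition~\ref{r} shows $f^{\tilde{\kappa'}} = f^{\tilde\kappa}\circ r$ whenever $\tilde\kappa = r(\tilde{\kappa'})$, and hence $(f^{\tilde\kappa})_*r(\Lambda') = (f^{\tilde{\kappa'}})_*\Lambda'$. Therefore $(f^{\tilde\kappa})_*\nu$ is a common part of $(f^{\tilde\kappa})_*\Lambda$ and $(f^{\tilde{\kappa'}})_*\Lambda'$, with positive total mass $\nu(X^{|\tilde\kappa|})$. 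Since adding the remaining summands on each side cannot destroy a positive common mass, (\ref{pairwise}) fails for every $f$ and DL fails.

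\textbf{Backward direction.} Assume (\ref{*}). For each of the countably many pairs $(\tilde\kappa,\tilde{\kappa'})$ of tuples of non-zero integers, Theorem~\ref{T2} provides a comeager set of $f$ for which $f^{\tilde\kappa}$ and $f^{\tilde{\kappa'}}$ are almost $R$-disjoint with $R = \{r\in S_t : \tilde{\kappa'} = r(\tilde\kappa)\}$. Fix $f$ in the (comeager) intersection and suppose, for contradiction, that (\ref{pairwise}) fails for some choice. By disintegrating the common part of the two pushforward measures along $f^{\tilde\kappa}$ and $f^{\tilde{\kappa'}}$, form a measure $\eta$ on $X^{|\tilde\kappa|}\times X^{|\tilde{\kappa'}|}$ with marginals $\leq\Lambda$ and $\leq\Lambda'$, supported on $\{f^{\tilde\kappa}(\mathbf{x}) = f^{\tilde{\kappa'}}(\mathbf{y})\}$, of positive total mass. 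By the pairwise-distinct observation, $\eta$ charges only pairs of tuples with pairwise distinct coordinates lying off the null set witnessing almost $R$-disjointness, which then forces
\[
\operatorname{supp}\eta \subseteq \bigcup_{r\in R}\{(\mathbf{x},r(\mathbf{x})):\mathbf{x}\in X^{|\tilde\kappa|}\}.
\]
Since these graphs are pairwise disjoint on pairwise distinct tuples, $\eta = \sum_{r\in R}\eta_r$ and some $\eta_r\neq 0$; its first marginal $\lambda$ satisfies $\lambda\leq\Lambda$ and $r_*\lambda\leq\Lambda'$, hence $\lambda\leq\Lambda\wedge r^{-1}(\Lambda')$. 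Setting $s = r^{-1}\in S_t$ we have $\tilde\kappa = s(\tilde{\kappa'})$, so (\ref{*}) yields $\Lambda\perp s(\Lambda') = r^{-1}(\Lambda')$, contradicting $\lambda > 0$. The main obstacle is the disintegration-and-graph-decomposition step; the automatic pairwise-distinct concentration of $\Lambda_{(\kappa_i)}$ is the technical key that keeps this clean and avoids a more delicate recursive handling of ``collision patterns'' among the coordinates.
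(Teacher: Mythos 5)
Your argument is correct and follows the same overall strategy as the paper: pass via Solecki's decomposition to the identification of each convolution as a countable sum of pushforwards $(f^{\tilde\kappa})_*\Lambda_{(\kappa_i)}$, reduce DL to the pairwise singularity of these pushforwards for comeagerly many $f$, and then invoke Theorem~\ref{T2}. The forward direction (you argue by contraposition, the paper argues directly) and the reduction to $\lambda_\kappa^1$ via (\ref{A3}) match the paper essentially verbatim. Where you genuinely diverge is in the backward direction: the paper splits into two cases according to whether an $r_0$ with $\tilde\kappa = r_0(\tilde\kappa')$ exists, and in the first case constructs explicit carriers $F,G$ with $F\cap r(G)=\emptyset$ for $r$ in the stabilizer $R$ before applying the ``almost $R$-to-one'' property. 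You instead handle both cases uniformly through ``almost $R$-disjointness'' and a disintegration lemma: you build a joining $\eta$ with marginals $\leq\Lambda,\Lambda'$ supported on $\{f^{\tilde\kappa}(\mathbf{x})=f^{\tilde\kappa'}(\mathbf{y})\}$, restrict it to the pairwise-disjoint union of the graphs of $r\in R$, and read off a nonzero common part of $\Lambda$ and $r^{-1}(\Lambda')$, contradicting (\ref{*}). This is cleaner in two respects worth highlighting. First, you isolate and justify the fact that each $\Lambda_{(\kappa_i)}$ is concentrated on tuples with pairwise distinct coordinates, using (\ref{A2}) within blocks and (\ref{A1}) together with non-atomicity of $\mu$ across blocks; this hypothesis is essential for applying the ``almost $R$-to-one/disjoint'' conclusion of Theorem~\ref{T2} but is used only implicitly in the paper. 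Second, the disintegration step avoids reasoning about the (merely analytic) images $f^{\tilde\kappa}(F)$, $f^{\tilde\kappa}(G)$, which the paper's set-theoretic argument skates over. So: same key lemma, same decomposition, but a tidier and slightly more general packaging of the backward implication.
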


\bigskip\noindent


\section{Proof of Theorem \ref{MT}}
We prove the theorem under the assumption that for every $\kappa$ at most one of the measures $\lambda_\kappa^i$ is non-zero, that is, only $\lambda_\kappa=\lambda_\kappa^1$ can be non-zero. Then, we show that the general statement follows consequently.

 Assume that $\Phi=\bigoplus\sigma(\kappa,\lambda_\kappa)$ satisfies the DL--condition. We will show that $\Phi$ satisfies (\ref{*}). Let $(\kappa_1, \kappa_2, \dots, \kappa_l)$, $(\kappa_1', \kappa_2', \dots, \kappa_{l'}')$, and $r\in \operatorname{S}_t$ be such that ($t=\left|\kappa_1\right|+\cdots+\left|\kappa_l\right|=\left|\kappa_1'\right|+\cdots+\left|\kappa_{l'}'\right|$)
\[
k(1)\kappa_1+k(2)\kappa_2+\cdots+k(l)\kappa_l= r\Big(k'(1)\kappa_{1}'+k'(2)\kappa_{2}'+\cdots+k'(l')\kappa_{l'}'\Big)
\]
for some non-zero integer numbers $(k(1), k(2), \dots, k(l))$ and $(k'(1), k'(2), \dots, k'(l'))$, provided that one is not a rearrangement of the other. For $f\in L^0(\mu,\mathbb{T})$, we define $h,h':X^t\rightarrow \mathbb{T}$ as follows:
\begin{align*}
h(x_1,\dots,x_t)=f^{k(1)\kappa_1+k(2)\kappa_2+\cdots+k(l)\kappa_l}(x_1,\dots,x_t),\\ h'(x_1,\dots,x_t)=f^{k'(1)\kappa_{1}'+k'(2)\kappa_{2}'+\cdots+k'(l')\kappa_{l'}'}(x_1,\dots,x_t).
\end{align*}
By the DL--condition, for comeagerly many $f\in L^0(\mu,\mathbb{T})$ we have
\[
\sigma_{\Phi(f)^{k(1)}}\ast\cdots\ast\sigma_{\Phi(f)^{k(l)}} \perp\sigma_{\Phi(f)^{k'(1)}}\ast\cdots\ast\sigma_{\Phi(f)^{k'(l')}}.
\]
One can show that the maximal spectral type of $\Phi(f)$ is equivalent to $\Sigma_\kappa \alpha_\kappa \mu_{f,\kappa}$ where $\mu_{f,\kappa}$ is the maximal spectral type of $\sigma(\kappa,\lambda_\kappa)(f)$ and $0<\alpha_\kappa<1$ is chosen so that $\Sigma_\kappa \alpha_\kappa \mu_{f,\kappa}$ is finite. Therefore, for comeagerly many $f\in L^0(\mu,\mathbb{T})$
\begin{equation}\label{eq}
\mu_{f,k(1)\kappa_1}\ast\cdots\ast\mu_{f,k(l)\kappa_l}\perp\mu_{f,k'(1)\kappa_1'}\ast\cdots\ast\mu_{f,k'(l')\kappa_{l'}'}.
\end{equation}
Furthermore, $\mu_{f,k(1)\kappa_1}\ast\cdots\ast\mu_{f,k(l)\kappa_l}$ is the push-forward measure of $\lambda_{\kappa_1}\times\cdots\times\lambda_{\kappa_l}$ under $h$, and $\mu_{f,k'(1)\kappa_1'}\ast\cdots\ast\mu_{f,k'(l')\kappa_{l'}'}$ is the push-forward measure of $\lambda_{\kappa_{1}'}\times\dots\times\lambda_{\kappa_{l'}'}$ under $h'$. Since $h=r(h')$, equation (\ref{eq}) indicates that the push-forward measure of $\lambda_{\kappa_1}\times\cdots\times\lambda_{\kappa_l}$ and $r(\lambda_{\kappa_{1}'}\times\dots\times\lambda_{\kappa_{l'}'})$ under the same function, $h=r(h')$, are orthogonal to each other. Hence,
\[
\lambda_{\kappa_1}\times\lambda_{\kappa_2}\times\cdots\times\lambda_{\kappa_l}\perp r(\lambda_{\kappa_{1}'}\times\lambda_{\kappa_{2}'}\times\dots\times\lambda_{\kappa_{l'}'}).
\]
Note that the same argument proves the general case where $\lambda_\kappa^i$ is not necessarily zero for $i>1$.

Now assume $\Phi$ satisfies (\ref{*}), we show that $\Phi$ also satisfies the DL--condition. By (\ref{*}),
\[ 
\lambda_{\kappa_1}\times\lambda_{\kappa_2}\times\cdots\times\lambda_{\kappa_l} \perp r(\lambda_{\kappa_{1}'}\times\lambda_{\kappa_{2}'}\times\dots\times\lambda_{\kappa_{l'}'})
\]
for every $(\kappa_1, \kappa_2, \dots, \kappa_l)$, $(\kappa_1', \kappa_2', \dots, \kappa_{l'}')$, and $r\in \operatorname{S}_t$ so that
\[
k(1)\kappa_1+k(2)\kappa_2+\cdots+k(l)\kappa_l= r(k'(1)\kappa_{1}'+k'(2)\kappa_{2}'+\cdots+k'(l')\kappa_{l'}') 
\]
for some non-zero integer numbers $(k(1), k(2), \dots, k(l))$ and $(k'(1), k'(2), \dots, k'(l'))$, provided that one is not a rearrangement of the other. We have
	\[
	\sigma_{\Phi(f)^{k(1)}}\ast\cdots\ast\sigma_{\Phi(f)^{k(l)}} \ \perp \ \sigma_{\Phi(f)^{k'(1)}}\ast\cdots\ast\sigma_{\Phi(f)^{k'(l')}} 
	\]
if and only if
	\[
	\mu_{f,k(1)\kappa_1}\ast\cdots\ast\mu_{f,k(l)\kappa_l}\perp\mu_{f,k'(1)\kappa_1'}\ast\cdots\ast\mu_{f,k'(l')\kappa_{l'}'}
	\]
for every $(\kappa_1, \kappa_2, \dots, \kappa_l)$ and $(\kappa_1', \kappa_2', \dots, \kappa_{l'}')$.

Fix $(\kappa_1, \kappa_2, \dots, \kappa_l)$ and $(\kappa_1', \kappa_2', \dots, \kappa_{l'}')$. We define $h:X^t\rightarrow\mathbb{T}$ and $h':X^{t'}\rightarrow\mathbb{T}$ as follows
\begin{align*}
h(x_1,\dots,x_t)=f^{k(1)\kappa_1+k(2)\kappa_2+\cdots+k(l)\kappa_l}(x_1,\dots,x_t),\\ h'(x_1,\dots,x_{t'})=f^{k'(1)\kappa_{1}'+k'(2)\kappa_{2}'+\cdots+k'(l')\kappa_{l'}'}(x_1,\dots,x_{t'})
\end{align*}
where
\begin{align*}
t&=\left|\kappa_1\right|+\left|\kappa_2\right|+\cdots+\left|\kappa_l\right|,\\ t'&=\left|\kappa_{1}'\right|+\left|\kappa_{2}'\right|+\cdots+\left|\kappa_{l'}'\right|.
\end{align*}	
Then, $\mu_{f,k(1)\kappa_1}\ast\cdots\ast\mu_{f,k(l)\kappa_l}$ and $\mu_{f,k'(1)\kappa_1'}\ast\cdots\ast\mu_{f,k'(l')\kappa_{l'}'}$ are push-forward measures of $\lambda_{\kappa_1}\times\cdots\times\lambda_{\kappa_l}$ and $\lambda_{\kappa_{1}'}\times\dots\times\lambda_{\kappa_{l'}'}$ under $h$ and $h'$, respectively. Assume there is $r_0\in S_t$ such that (in particular $t=t'$)
\[
k(1)\kappa_1+k(2)\kappa_2+\cdots+k(l)\kappa_l= r_0(k'(1)\kappa_{1}'+k'(2)\kappa_{2}'+\cdots+k'(l')\kappa_{l'}').
\]	
We define
\[
R=\{ r\in S_t : k(1)\kappa_1+k(2)\kappa_2+\cdots+k(l)\kappa_l=r(k(1)\kappa_1+k(2)\kappa_2+\cdots+k(l)\kappa_l)\}.
\]
Since $\Phi$ satisfies (\ref{*}), for every $r\in R$, we have
\[
\lambda_{\kappa_1}\times\lambda_{\kappa_2}\times\cdots\times\lambda_{\kappa_l} \perp r r_0(\lambda_{\kappa_{1}'}\times\lambda_{\kappa_{2}'}\times\dots\times\lambda_{\kappa_{l'}'}).
\]
Therefore, we can find $F,G\subseteq X^t$ such that 
\begin{align*}
&\lambda_{\kappa_1}\times\lambda_{\kappa_2}\times\cdots\times\lambda_{\kappa_l}(F)=1, \\ &r r_0(\lambda_{\kappa_{1}'}\times\lambda_{\kappa_{2}'}\times\dots\times\lambda_{\kappa_{l'}'})(r(G))=1 \ \text{for every} \ r\in R, \ 
\end{align*}
and $F\cap r(G)=\emptyset$ for every $r\in R$. By Theorem \ref{T2}, for comeagerly many ${f\in L^0(\mu,\mathbb{T})}$ the push-forward measures of $\lambda_{\kappa_1}\times\cdots\times\lambda_{\kappa_l}$ and $r_0(\lambda_{\kappa_{1}'}\times\dots\times\lambda_{\kappa_{l'}'})$ under $f^{k(1)\kappa_1+\cdots+k(l)\kappa_l}$ are perpendicular to each other since the latter holds for every ${f\in L^0(\mu,\mathbb{T})}$ such that $f^{k(1)\kappa_1+\cdots+k(l)\kappa_l}$ is almost $R$-to-one. Therefore,
	\[
	\mu_{f,k(1)\kappa_1}\ast\cdots\ast\mu_{f,k(l)\kappa_l}\perp\mu_{f,k'(1)\kappa_1'}\ast\cdots\ast\mu_{f,k'(l')\kappa_{l'}'}.
	\]	
If there does not exist such $r_0\in S_t$, then by Theorem \ref{T2}, for comeagerly many $f\in L^0(\mu,\mathbb{T})$
\[
f^{k(1)\kappa_1+\cdots+k(l)\kappa_l}\cap f^{k'(1)\kappa_{1}'+\cdots+k'(l')\kappa_{l'}'}\approx \emptyset.
\]
Thus, for comeagerly many ${f\in L^0(\mu,\mathbb{T})}$ the push-forward measures of $\lambda_{\kappa_1}\times\cdots\times\lambda_{\kappa_l}$ and $\lambda_{\kappa_{1}'}\times\dots\times\lambda_{\kappa_{l'}'}$ under $f^{k(1)\kappa_1+\cdots+k(l)\kappa_l}$ and $f^{k'(1)\kappa_{1}'+\cdots+k'(l')\kappa_{l'}'}$, respectively, are perpendicular to each other. Hence,
	\begin{equation}\label{mu}
	\mu_{f,k(1)\kappa_1}\ast\cdots\ast\mu_{f,k(l)\kappa_l}\perp\mu_{f,k'(1)\kappa_1'}\ast\cdots\ast\mu_{f,k'(l')\kappa_{l'}'}.
	\end{equation}
Note that the general case where $\lambda_\kappa^i$ is not necessarily zero for $i>1$ follows from (\ref{mu}) since for Borel measures 
\[
\mu_1,\dots,\mu_m,\nu_1,\dots,\nu_n,\mu_1',\dots,\mu_m',\nu_1',\dots,\nu_n'
\]
where $\mu_i'\ll \mu_i$ and $\nu_j'\ll \nu_j$, for $1\leq i\leq m$ and $1\leq j\leq n$, we have
\[
\text{if }\mu_1\ast\cdots\ast\mu_m \ \perp \ \nu_1\ast\cdots\ast\nu_n, \text{ then }\mu_1'\ast\cdots\ast\mu_m' \ \perp \ \nu_1'\ast\cdots\ast\nu_n'.
\]

\bigskip\noindent

\section{Generic behavior of a unitary operator}
Let $H$ be a separable infinite dimensional Hilbert space and $\psi\in H$ be a vector of length $1$. Melleray--Tsankov [\ref{MT_B},Theorem 4.4] proved that for a generic $u\in \mathcal{U}(H)$, $\overline{\langle u\rangle}$ is isomorphic to $L^0(\mu_u,\mathbb{T})$, where $\mu_u$ is the spectral measure of the vector $\psi$ with respect to $u$. Furthermore, they showed that the representation of $L^0(\mu_u,\mathbb{T})$ obtained by this isomorphism
\[
\Phi : L^0(\mu_u,\mathbb{T})\rightarrow \mathcal{U}(H)
\]
is the standard representation by pointwise multiplication, that is, for every $f\in L^0(\mu_u,\mathbb{T})$ we have
\[
\Phi(f)(h)=f\cdot h.
\]
Since for a generic $u\in \mathcal{U}(H)$, $\mu_u$ is non-atomic [\ref{MT_B},Lemma 4.3], we have that
\[
L^0(\mu_u,\mathbb{T})\cong L^0(\mu,\mathbb{T}).
\]

In the following, we use this result of Melleray--Tsankov [\ref{MT_B}] and Theorem \ref{MT} to show orthogonality conditions for a generic $u\in \mathcal{U}(H)$ analogous to orthogonality conditions in Theorem \ref{DL}. 
\begin{cor}\label{Cor_1}
Let $H$ be a separable infinite dimensional Hilbert space. Then, for a generic $u\in \mathcal{U}(H)$, the convolutions
\[
	\sigma_{u^{k(1)}}\ast\cdots\ast\sigma_{u^{k(l)}} \ \text{and} \ \sigma_{u^{k'(1)}}\ast\cdots\ast\sigma_{u^{k'(l')}}
\]
are mutually singular, provided that there does not exist $r\in S_l$ such that 
	\[
	(k'(1), k'(2), \dots, k'(l'))=r(k(1), k(2), \dots, k(l)).
	\] 	
\end{cor}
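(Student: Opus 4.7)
The plan is to combine the Melleray--Tsankov identification of $\overline{\langle u\rangle}$ for generic $u$ with Theorem \ref{MT}, and then promote the resulting ``genericity in $L^0(\mu,\mathbb{T})$'' conclusion to ``genericity in $\mathcal{U}(H)$'' via a dense $G_\delta$ argument.

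First I would fix the set
\[
V=\{u\in\mathcal{U}(H):\mu_u\text{ is non-atomic and }\overline{\langle u\rangle}\cong L^0(\mu_u,\mathbb{T})\text{ via pointwise multiplication}\},
\]
which is comeager by [\ref{MT_B}, Theorem 4.4 and Lemma 4.3]. For $u\in V$, let $\Phi_u\colon L^0(\mu_u,\mathbb{T})\to\mathcal{U}(H)$ be the corresponding continuous representation, and let $g_u\in L^0(\mu_u,\mathbb{T})$ be the unique element with $\Phi_u(g_u)=u$.

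Next, I would apply Theorem \ref{MT} to $\Phi_u$. Since $\Phi_u$ acts on $H\cong L^2(\mu_u)$ by pointwise multiplication, its Solecki decomposition contains only the single summand $\sigma((1),\mu_u)$: the only nonzero measure in the decomposition of Theorem \ref{Theorem_S} is $\lambda_{(1)}^1=\mu_u$. Condition (\ref{*}) is then satisfied vacuously, since any non-trivial instance forces every $\kappa_j=\kappa_j'=(1)$, and the identity $k(1)\kappa_1+\cdots+k(l)\kappa_l=r(k'(1)\kappa_1'+\cdots+k'(l')\kappa_{l'}')$ then reduces precisely to the statement that $(k(1),\dots,k(l))$ is a rearrangement of $(k'(1),\dots,k'(l'))$---the case excluded by hypothesis. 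Hence $\Phi_u$ satisfies the DL-condition, and there is a comeager set $G_u\subseteq L^0(\mu_u,\mathbb{T})$ such that for every $f\in G_u$, the operator $\Phi_u(f)$ satisfies the required orthogonality of convolutions.

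Finally, for each pair $(\vec k,\vec k')$ of positive integer tuples that are not rearrangements of each other, let $\mathcal{D}_{\vec k,\vec k'}\subseteq\mathcal{U}(H)$ be the set of unitaries satisfying the orthogonality for this pair. The desired set is $\bigcap_{(\vec k,\vec k')}\mathcal{D}_{\vec k,\vec k'}$, a countable intersection, so it suffices to show each $\mathcal{D}_{\vec k,\vec k'}$ is dense $G_\delta$. Density follows immediately: given any non-empty open $U\subseteq\mathcal{U}(H)$, pick $u_0\in V\cap U$ using comeagerness of $V$; by continuity of $\Phi_{u_0}$ and density of $G_{u_0}$ in $L^0(\mu_{u_0},\mathbb{T})$, choose $f\in G_{u_0}$ close enough to $g_{u_0}$ that $\Phi_{u_0}(f)\in U$, and this element lies in $\mathcal{D}_{\vec k,\vec k'}$ by construction. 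The main obstacle is the $G_\delta$ property, which I would establish by adapting the template of Del Junco--Lema\'nczyk's proof of Theorem \ref{DL}: the map $u\mapsto\sigma_{u^k}$ is Borel into $\mathbb{P}(\mathbb{T})$ with the weak topology, convolution is weakly continuous, and mutual singularity of two such convolutions on $\mathbb{T}$ can be expressed as a countable intersection of open conditions by approximating the pair of measures by finite-dimensional projections onto a dense family of characteristic-like functionals. Combining the three parts gives $\bigcap\mathcal{D}_{\vec k,\vec k'}$ comeager, as required.
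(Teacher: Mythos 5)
Your first two steps coincide with the paper's proof: invoking Melleray--Tsankov to identify $\overline{\langle u\rangle}$ with $L^0(\mu_u,\mathbb{T})$ acting by multiplication for generic $u$, observing that the Solecki decomposition then has only the summand $\sigma((1),\mu_u)$ so that condition~(\ref{*}) is vacuous and Theorem~\ref{MT} gives the DL-condition, and concluding that for generic $u$ the orthogonality holds for generic $v\in\overline{\langle u\rangle}$. The divergence comes in the final step of promoting this ``relative genericity'' to genericity in $\mathcal{U}(H)$. The paper does this by an explicit Kuratowski--Ulam-type transfer principle, Theorem~\ref{KU-var} (built from Lemmas~\ref{Lem1} and \ref{Lem2}, that $u\mapsto u^n$ is category preserving and that conjugates of a generic $u$ are dense), which needs only the Baire property of the set $G=\{u:\sigma_{u^{k(1)}}\ast\cdots\ast\sigma_{u^{k(l)}}\perp\sigma_{u^{k'(1)}}\ast\cdots\ast\sigma_{u^{k'(l')}}\}$, and that is automatic because $G$ is analytic. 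You instead try to show directly that each $\mathcal{D}_{\vec k,\vec k'}$ is a dense $G_\delta$ set; your density argument is fine and implicitly reproves the easy direction of Theorem~\ref{KU-var}, but it is the $G_\delta$-ness you never actually establish.

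That is where the genuine gap is. You assert that ``the map $u\mapsto\sigma_{u^k}$ is Borel into $\mathbb{P}(\mathbb{T})$ with the weak topology'' and that mutual singularity of the convolutions ``can be expressed as a countable intersection of open conditions.'' But the maximal spectral type $\sigma_{u^k}$ is not a well-defined element of $\mathbb{P}(\mathbb{T})$; it is an equivalence class under mutual absolute continuity, and producing a Borel selection with respect to which the orthogonality condition becomes $G_\delta$ is a non-trivial technical point. (For a fixed cyclic vector $\psi$, the spectral measure $\sigma^u_\psi$ does vary continuously and equals the maximal type when $\psi$ is cyclic for $u$ --- but then one also needs $\psi$ to remain ``good enough'' for the powers $u^k$ and for the convolutions, and this is exactly the kind of work the Del Junco--Lema\'nczyk argument does, with a condition stronger than the bare orthogonality being the one shown $G_\delta$.) The paper's route via Theorem~\ref{KU-var} is deliberately chosen to sidestep this: the whole point of that lemma is that no descriptive-set-theoretic upper bound on $G$ better than the Baire property is required. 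If you wish to keep the direct dense-$G_\delta$ strategy, you must replace the hand-wave with a concrete continuous (or at least Borel) assignment of representatives of the maximal spectral types, valid on a comeager set of $u$, and verify that the orthogonality of their convolutions is a $G_\delta$ condition in those coordinates.
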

We need the following theorem to prove Corollary \ref{Cor_1}.
\begin{thm}\label{KU-var}
Let $H$ be a separable infinite dimensional Hilbert space and $G$ be a subset of $\mathcal{U}(H)$ with the Baire property. Then, $G$ is comeager iff $G\cap\overline{\langle u\rangle}$ is comeager in $\overline{\langle u\rangle}$ for comeagerly many $u\in \mathcal{U}(H)$. 
\end{thm}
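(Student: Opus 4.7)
The plan is a Kuratowski--Ulam type argument built on the Melleray--Tsankov identification $\overline{\langle u\rangle}\cong L^0(\mu,\mathbb{T})$ holding generically, which turns the ``fibration'' $u\mapsto \overline{\langle u\rangle}$ into a fibration with a fixed Polish fiber.

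First I would invoke [\ref{MT_B}, Theorem 4.4 and Lemma 4.3] to fix a comeager set $D\subseteq \mathcal{U}(H)$ and, for each $u\in D$, a topological group isomorphism $\phi_u:L^0(\mu,\mathbb{T})\to \overline{\langle u\rangle}$ coming from the spectral functional calculus $f\mapsto f(u)$ once a Borel-chosen identification of the non-atomic spectral measure $\mu_u$ with the fixed reference $\mu$ is made. Such an identification can be arranged to depend on $u$ in a Borel way by a Kuratowski--Ryll-Nardzewski selection, using that all non-atomic standard Borel probability spaces are mutually Borel isomorphic.

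Next I would assemble these into a single map
\[
\Phi:D\times L^0(\mu,\mathbb{T})\to \mathcal{U}(H),\qquad \Phi(u,f)=\phi_u(f),
\]
and verify that $\Phi$ is jointly continuous and open, hence category-preserving in both directions. Granting this, for any $G\subseteq \mathcal{U}(H)$ with the Baire property, $\Phi^{-1}(G)$ has the Baire property in the product $\mathcal{U}(H)\times L^0(\mu,\mathbb{T})$, and the classical Kuratowski--Ulam theorem gives that $\Phi^{-1}(G)$ is comeager in the product iff for comeagerly many $u$ the $u$-section $\phi_u^{-1}(G\cap \overline{\langle u\rangle})$ is comeager in $L^0(\mu,\mathbb{T})$; by the homeomorphism $\phi_u$ this is equivalent to $G\cap \overline{\langle u\rangle}$ being comeager in $\overline{\langle u\rangle}$. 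Simultaneously, the category-preservation of $\Phi$ gives that $\Phi^{-1}(G)$ is comeager iff $G$ is comeager in $\mathcal{U}(H)$. Chaining the two equivalences yields the theorem.

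The main obstacle is verifying the continuity and openness of $\Phi$, since mere Borel measurability of the family $u\mapsto \phi_u$ is not sufficient to carry out the category-transfer step. Because the source $L^0(\mu_u,\mathbb{T})$ of $\phi_u$ varies with $u$, a uniform transport to the fixed $L^0(\mu,\mathbb{T})$ must be arranged (possibly after shrinking $D$ to a smaller comeager set), and joint continuity of $(u,f)\mapsto f(u)$ must then be extracted from strong-operator continuity of the multiplication representation on $L^2(\mu_u,\mathbb{C})$. The openness of $\Phi$ in the second coordinate reduces to openness of restriction maps $f\mapsto f\upharpoonright U$ on $L^0$ of the sort already used in Section 3, while openness in the first coordinate reflects the fact that conjugating $u$ by a small unitary perturbation changes $\phi_u(f)$ only slightly uniformly in $f$ on bounded sets. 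These are the delicate technical points around which the argument hinges.
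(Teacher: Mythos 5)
Your proposal hinges on building a jointly continuous and open map $\Phi(u,f)=\phi_u(f)$, where $\phi_u$ is a chosen isomorphism $L^0(\mu,\mathbb{T})\to\overline{\langle u\rangle}$, and then running Kuratowski--Ulam on $\Phi^{-1}(G)$. This step does not go through. The isomorphism $\phi_u$ factors through the spectral functional calculus $f\mapsto f(u)$ for $f\in L^0(\mu_u,\mathbb{T})$, followed by a measure isomorphism between $(\mathbb{T},\mu_u)$ and a fixed reference $(X,\mu)$. The spectral measure $\mu_u$ varies highly discontinuously with $u$ in the strong operator topology, so the intermediate measure isomorphisms can only be chosen Borel-measurably, not continuously. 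Worse, even for a fixed identification the assignment $(u,f)\mapsto f(u)$ is not jointly continuous: when $f$ is a genuine $L^0$-function (say a step function on $\mathbb{T}$) arbitrarily small strong perturbations of $u$ can move spectral mass across a discontinuity of $f$ and change $f(u)$ by an amount bounded away from zero. So $\Phi$ is at best Borel, and Borel measurability is not sufficient to transport comeagerness through Kuratowski--Ulam or to conclude that $\Phi$ is category preserving. You flag this as ``the main obstacle,'' but it is not a technicality to be fixed later; it is where the argument actually breaks.

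The paper's proof avoids the fibration idea entirely. It invokes Solecki's Lemma~3 from~[\ref{S2_B}], which proves the analogous statement for $\operatorname{Aut}(\mu)$, and observes that Solecki's argument transfers to $\mathcal{U}(H)$ once two facts are supplied: Lemma~\ref{Lem1}, that each power map $u\mapsto u^n$ is category preserving on $\mathcal{U}(H)$, and Lemma~\ref{Lem2}, that unitaries with dense conjugacy class are dense. These are topological facts about the ambient group itself and require no continuous (or even measurable) selection of the isomorphisms $\overline{\langle u\rangle}\cong L^0(\mu,\mathbb{T})$. Roughly, category preservation of the power maps lets one push a dense open set $F^c\subseteq\mathcal{U}(H)$ down onto the dense subgroup $\langle u\rangle\subseteq\overline{\langle u\rangle}$ for comeagerly many $u$, giving the forward implication, while density of conjugacy classes together with the Baire property of $G$ handles the converse. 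If you want to salvage a Kuratowski--Ulam flavor, the right ``parametrization'' is the countable family $u\mapsto u^n$ rather than any continuous family $u\mapsto\phi_u$, and that is exactly what Lemma~\ref{Lem1} supplies.
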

Solecki [\ref{S2_B}, Lemma 3] proved an analogous statement for $G\subseteq \operatorname{Aut}(\mu)$ with the Baire property. Assuming the following two lemmas (Lamma \ref{Lem1},\ref{Lem2}), the same proof can be repeated to prove Theorem \ref{KU-var}.
\begin{lem}\label{Lem1}
Let $H$ be a separable infinite dimensional Hilbert space. We define 
\[
f_n:\mathcal{U}(H)\rightarrow \mathcal{U}(H)
\]
such that
\[
f_n(u)=u^n.
\]
Then, $f_n$ is category preserving.
\end{lem}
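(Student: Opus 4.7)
The task is to show that the $n$-th power map $f_n: \mathcal{U}(H) \to \mathcal{U}(H)$, $u \mapsto u^n$, is category preserving. My plan parallels Solecki's Lemma 3 in [\ref{S2_B}] for $\operatorname{Aut}(\mu)$.

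Continuity of $f_n$ is immediate because $\mathcal{U}(H)$ with the strong operator topology is a Polish topological group, so multiplication, and hence the iterated product $u \mapsto u^n$, is continuous. I would then reduce to showing $f_n$ is \emph{semi-open}, meaning $f_n(U)$ is somewhere dense in $\mathcal{U}(H)$ for every non-empty open $U$. Combined with continuity, this is sufficient: the preimage of every dense open set is dense and open, so the preimage of any comeager $G_\delta$ -- and hence any comeager set, which contains such a $G_\delta$ -- is comeager.

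To establish semi-openness, fix $u \in \mathcal{U}(H)$ and a basic neighborhood $U = \{v : \|(v-u)x_i\| < \epsilon, \, 1 \le i \le k\}$. I would parametrize perturbations as $v = uX$ for $X$ close to the identity, and write $X = e^{iY}$ for a small self-adjoint $Y$. Expanding,
\[
(uX)^n = u^n \cdot (u^{-(n-1)} X u^{n-1}) \cdots (u^{-1} X u) \cdot X,
\]
the linearization of $Y \mapsto u^{-n}(uX)^n$ at $Y = 0$ is the averaging operator $L_u(Y) = \sum_{k=0}^{n-1} u^{-k} Y u^k$. When $L_u$ is invertible on a suitable Banach space of self-adjoint operators, an inverse-function-theorem argument yields a continuous local section of $f_n$ near $u$, so $f_n(U)$ contains an open neighborhood of $u^n$.

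The main obstacle is twofold: executing the inverse function theorem in a topology adapted to the strong operator topology (where standard Banach space machinery does not directly apply), and handling $u$ for which $L_u$ fails to be invertible (for example, when $u$ has eigenvectors at non-trivial $n$-th roots of unity). I would address the second issue by noting that the set of $u$ with continuous spectrum is a dense $G_\delta$ in $\mathcal{U}(H)$ and that $L_u$ is injective for such $u$; semi-openness on this dense set, together with continuity, implies semi-openness of $f_n$ everywhere. Finite-dimensional approximations (where the inversion reduces to a standard compact Lie group calculation) combined with a density and transfer argument can be used to control the first issue. This strategy mirrors Solecki's approach in the $\operatorname{Aut}(\mu)$ setting.
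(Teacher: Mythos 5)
Your reduction to ``continuous $+$ semi-open $\Rightarrow$ category preserving'' is correct and agrees with the paper's reduction (the paper phrases it as: it suffices to show $f_n^{-1}(A)$ is dense whenever $A$ is dense open). But the way you propose to establish semi-openness diverges substantially from the paper and, as written, has gaps that I do not see how to close.

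The central difficulty is that you are trying to run an inverse-function-theorem argument, and you correctly identify the two obstacles — the strong operator topology is not a Banach topology, and the linearization $L_u(Y)=\sum_{k=0}^{n-1}u^{-k}Yu^k$ need not be invertible — but you only gesture at resolving them. For the first, ``finite-dimensional approximations combined with a density and transfer argument'' is not an argument; in the SOT one must say precisely what it means for a local section to exist and how it interacts with the weak-$*$-like convergence. For the second, the issue is worse than you acknowledge: for $u$ with continuous spectrum, $L_u$ is typically injective but does \emph{not} have bounded inverse (the spectrum of $L_u$ accumulates at $0$ whenever the spectrum of $u$ contains pairs $e^{i\theta_1},e^{i\theta_2}$ with $e^{i(\theta_1-\theta_2)}$ arbitrarily close to a nontrivial $n$-th root of unity, which is automatic for continuous spectrum). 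Injectivity without bounded inverse gives you no inverse function theorem, even in the norm topology, and restricting to a dense $G_\delta$ of ``good'' $u$ does not by itself transfer to semi-openness everywhere: you would still need the local openness at each such $u$, which is exactly what is missing. Finally, note that semi-openness (let alone a local section) is more than the lemma requires.

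The paper's proof is more elementary and exploits the freedom you are not using: since $A$ is open and dense, you are allowed to perturb the target. Concretely, it reduces to a finite-dimensional invariant subspace $H_0 = \operatorname{span}\{Z_i, u_0(Z_i)\}$, diagonalizes $u_0$ there, arranges (after a small perturbation) that the eigenvalues are rationally independent so that $u_0^{nl}\approx u_0$ for suitable $l$, picks $u_1\in A$ near $u_0^n$, and then — crucially — \emph{modifies} $u_1$ (staying in $A$) so that it is diagonal with eigenvalues repeated in blocks of size $n$. On each block one can then write down an explicit $n$-th root $\tilde u$ by a twisted cyclic shift, and check $\tilde u\in W$ and $\tilde u^n = u_1 \in A$. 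No differential calculus is needed, no linearization, and no section: a single explicit root suffices, and the perturbation of $u_1$ is what makes it constructible. If you want to keep your framework, the concrete missing step is to replace the inverse-function-theorem machinery with an explicit construction of a single nearby $n$-th root of a suitably adjusted element of $A$; that is what the paper does.
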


\begin{proof}
It is enough to show that if $A\subseteq \mathcal{U}(H)$ is dense and open, then $f_n^{-1}(A)$ is dense. Let
\[
W=\{u\in \mathcal{U}(H) \ : \ \|u(Z_i)-u_0(Z_i)\|<\epsilon \ \text{for every } 1\leq i\leq k \}
\]
for some $u_0\in \mathcal{U}(H)$, $Z_i\in H$ for $i= 1,2,\dots,k$, and $\epsilon>0$. We have to find $\tilde{u}\in W$ such that $\tilde{u}^n\in A$. Let 
\[
H_0=\operatorname{span}\{Z_1,\dots,Z_k,u_0(Z_1),\dots,u_0(Z_k)\}.
\]
By modifying $u_0$ on $\operatorname{span}\{Z_1,\dots,Z_k\}^\perp$, we may assume that $u_0(H_0)=H_0$ and $u_0\upharpoonright H_0^\perp=id$. Since $u_0\upharpoonright H_0$ is a unitary operator of $H_0$, it is diagonalizable. Thus, we can find an orthonormal basis for $H_0$, $\{h_1,\dots,h_m\}$, and eigenvalues $\lambda_1,\dots,\lambda_m\in\mathbb{T}$ such that
\[
u_0(h_i)=\lambda_i h_i \ \ \ \ \text{for } i=1,2,\dots,m.
\]
Furthermore, we may assume that $\{(\lambda_1^{ni},\dots,\lambda_m^{ni})\}_{i=1}^\infty$ is dense in $\mathbb{T}^m$. 

Fix $\delta>0$ and let $l\in \mathbb{N}$ be such that 
\[
\sum_{i=1}^m{\|\lambda_i^{nl}-\lambda_i\|}< \delta.
\]
Then, $\|u_0^{nl}-u_0\|< \delta$. Since $A$ is open and dense, there exists $u_1\in A$ and a finite dimensional subspace $H_1\supseteq H_0$ such that $u_1(H_1)=H_1$ and 
\[
\|u_1(h_i)-u_0^n(h_i)\|< \delta \ \ \ \ \text{for } i=1,2,\dots,m.
\]
By modifying $u_1$ on $H_1^\perp$, we may assume that there exists an orthonormal basis for $H$, $\{h'_i\}_{i=1}^\infty$, and eigenvalues, $\{\lambda'_i\}_{i=1}^\infty$, such that for every $i\in \mathbb{N}$
\[
u_1(h'_i)=\lambda'_i h'_i.
\]
Note that since $u_1\upharpoonright H_1$ is a unitary operator of $H_1$, it is possible to find such basis for $H$. Moreover, by changing the order of $\{h'_i\}_{i=1}^\infty$ and further modifying $u_1$ on $H_1^\perp$, we may assume that $H_0\subseteq \operatorname{span}\{h'_1,\dots,h'_{nm}\}$ and for every $i\in \mathbb{N}$
\[
\lambda'_{ni-n+1}=\cdots=\lambda'_{ni}.
\]  
We define $\tilde{u}\in \mathcal{U}(H)$ such that
\[
\tilde{u}(h'_{an+b})=\begin{cases} u_1^l(h'_{an+b+1}) &\mbox{if } 1\leq b\leq n-1 \\ 
u_1^{1-(n-1)l}(h'_{an+1}) & \mbox{if } b=n \end{cases}.
\]
Then, $\tilde{u}^n=u_1$. Moreover, if $\delta$ is small enough, then for every $i=1,2,\dots,k$
\[
\|u_1^l(Z_i)-u_0(Z_i)\|< \epsilon \ \text{and} \ \|u_1^{1-(n-1)l}(Z_i)-u_0(Z_i)\|< \epsilon.
\]
Thus, for every $i=1,2,\dots,k$
\[
\|\tilde{u}(Z_i)-u_0(Z_i)\|<\epsilon,
\]
that is, $\tilde{u}\in W$.
\end{proof}

\begin{lem}\label{Lem2}
Let $H$ be a separable infinite dimensional Hilbert space. Then, the set of all $u\in \mathcal{U}(H)$ with
\[
\{v u v^{-1} \ : \ v\in \mathcal{U}(H)\} \text{ dense}
\]
is dense.
\end{lem}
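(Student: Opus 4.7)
My plan is to prove the stronger statement that the set of $u\in\mathcal{U}(H)$ with dense conjugacy class is comeager in $\mathcal{U}(H)$, hence dense. The crux is the following ``transitivity'' claim, which I will establish by a direct finite-rank construction, bypassing any spectral-theoretic characterization of dense-conjugacy-class unitaries: for any two basic open sets $W, W'\subseteq\mathcal{U}(H)$, there exist $u\in W$ and $v\in\mathcal{U}(H)$ with $vuv^{-1}\in W'$.

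Granting this transitivity claim, fix a countable basis $\{V_n\}$ of $\mathcal{U}(H)$ and set
\[
D_n=\bigcup_{v\in\mathcal{U}(H)}\{u\in\mathcal{U}(H):vuv^{-1}\in V_n\}.
\]
Each member of the union is open since $u\mapsto vuv^{-1}$ is continuous, so $D_n$ is open. Transitivity applied with $W'=V_n$ and an arbitrary basic open $W$ makes $D_n$ dense. Then $D=\bigcap_n D_n$ is a dense $G_\delta$, and since $\{V_n\}$ is a basis, $D$ equals the set of $u\in\mathcal{U}(H)$ whose conjugacy class is dense.

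To prove transitivity, write
\[
W=\{u:\|u(Z_i)-u_0(Z_i)\|<\epsilon,\ 1\le i\le k\},\ W'=\{u:\|u(Y_j)-u_1(Y_j)\|<\epsilon',\ 1\le j\le m\}.
\]
Let $H_0=\operatorname{span}\{Z_i,u_0(Z_i):1\le i\le k\}$ and choose a finite-dimensional subspace $K\subseteq H$ orthogonal to $H_0$ with $\dim K\ge 2m$. The Gram matrix of the $2m$-tuple $(Y_1,\dots,Y_m,u_1(Y_1),\dots,u_1(Y_m))$ is positive semidefinite of rank at most $2m$, hence it is realized as the Gram matrix of some $2m$-tuple $(W_1,\dots,W_m,W_1',\dots,W_m')$ of vectors inside $K$. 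I then define $u$ on the finite-dimensional subspace $\operatorname{span}\{Z_i\}\oplus\operatorname{span}\{W_j\}$ by $u(Z_i)=u_0(Z_i)$ and $u(W_j)=W_j'$. This is isometric because $\operatorname{span}\{Z_i\}\perp\operatorname{span}\{W_j\}$ (as $K\perp H_0$), and each of the two pieces $\{Z_i\}\mapsto\{u_0(Z_i)\}$ and $\{W_j\}\mapsto\{W_j'\}$ matches Gram matrices (the first because $u_0$ is unitary, the second by construction). Similarly, setting $v^{-1}(Y_j)=W_j$ and $v^{-1}(u_1(Y_j))=W_j'$ defines a partial isometry on $\operatorname{span}\{Y_j,u_1(Y_j):1\le j\le m\}$. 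Extending both $u$ and $v^{-1}$ to unitaries of $H$, one checks $u(Z_i)=u_0(Z_i)$ exactly (so $u\in W$) and $vuv^{-1}(Y_j)=v(u(W_j))=v(W_j')=u_1(Y_j)$ exactly (so $vuv^{-1}\in W'$).

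The main subtlety is arranging the orthogonal direct-sum decomposition $\operatorname{span}\{Z_i\}\oplus\operatorname{span}\{W_j\}$ so that $u$ is unambiguously a partial isometry; this is what forces the $W_j$'s to be chosen inside $K\perp H_0$, and it is also why one needs $\dim K\ge 2m$ rather than just $\ge m$ (we must fit both $W_j$'s and $W_j'$'s in $K$). Everything else reduces to the two Gram-matrix identifications supplied by the unitarity of $u_0$ and by the explicit construction of the $W_j$'s in $K$.
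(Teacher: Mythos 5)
Your proof is correct, and it is a genuinely different (and self-contained) route. The paper does not give its own argument for Lemma \ref{Lem2}; it simply refers the reader to Nadkarni's book, whose Proposition $8.23$ rests on spectral-theoretic machinery (characterizing unitaries with dense conjugacy class in terms of maximal spectral type and spectral multiplicity). Your argument bypasses all of that: the transitivity claim is established by a direct finite-rank construction using only Gram-matrix realizations and the extension of partial isometries between finite-dimensional subspaces with infinite-codimensional orthogonal complements. The key technical points --- that matching Gram matrices makes $W_j\mapsto W_j'$ (and similarly $Y_j\mapsto W_j$, $u_1(Y_j)\mapsto W_j'$) well-defined partial isometries, that putting the $W_j,W_j'$ in $K\perp H_0$ forces the cross inner products $\langle u_0(Z_i),W_j'\rangle$ to vanish so that $u$ is a partial isometry on $\operatorname{span}\{Z_i\}\oplus\operatorname{span}\{W_j\}$, and that the resulting $u$ and $v$ hit the targets exactly --- are all sound. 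Beyond being more elementary, your approach proves the stronger statement that the set of $u$ with dense conjugacy class is a dense $G_\delta$ (hence comeager), which is exactly the form most useful in conjunction with Theorem \ref{KU-var}, whereas the lemma as stated only asserts density. The one place worth spelling out a bit more carefully in a final write-up is the well-definedness of the partial isometries when the $Z_i$'s or $Y_j$'s are linearly dependent; this does follow from the Gram-matrix identities (a vanishing combination on one side has vanishing norm on the other), but it deserves an explicit sentence.
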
 
We encourage the reader to review [\ref{Nad_B}, Proposition $8.23$] for a proof of Lemma \ref{Lem2}.

\begin{proof}[Proof of Corollary \ref{Cor_1}]
By [\ref{MT_B},Theorem 4.4], for a generic $u\in \mathcal{U}(H)$, the representation of $L^0(\mu,\mathbb{T})$ obtained by $\overline{\langle u\rangle}\cong L^0(\mu,\mathbb{T})$ is equal to $\sigma(1,\mu_u)$, that is, the representation has only one non-zero measure, namely, $\lambda_1^1=\mu_u$. Therefore, by Theorem \ref{MT}, the representation satisfies the DL--condition. Thus, for a generic $u\in \mathcal{U}(H)$, for a generic $v\in\overline{\langle u\rangle}$, we have: for every $k(1), k(2), \dots, k(l)\in \mathbb{Z}^+$, $k'(1), k'(2), \dots, k'(l')\in \mathbb{Z}^+$, the convolutions
	\[
	\sigma_{v^{k(1)}}\ast\cdots\ast\sigma_{v^{k(l)}} \ \text{and} \ \sigma_{v^{k'(1)}}\ast\cdots\ast\sigma_{v^{k'(l')}}
	\]
	are mutually singular, provided that there does not exist $r\in S_l$ such that 
	\[
	(k'(1), k'(2), \dots, k'(l'))=r(k(1), k(2), \dots, k(l)).
	\] 	
	
Fix $k(1), k(2), \dots, k(l)\in \mathbb{Z}^+$, $k'(1), k'(2), \dots, k'(l')\in \mathbb{Z}^+$, where there does not exist $r\in S_l$ such that 
	\[
	(k'(1), k'(2), \dots, k'(l'))=r(k(1), k(2), \dots, k(l)).
	\] 	
Let 
\[
G=\{u\in \mathcal{U}(H) \ : \ \sigma_{u^{k(1)}}\ast\cdots\ast\sigma_{u^{k(l)}} \ \perp \ \sigma_{u^{k'(1)}}\ast\cdots\ast\sigma_{u^{k'(l')}}\}.
\]
Since $G$ has the Baire property
, by Theorem \ref{KU-var}, G is a comeager subset of $\mathcal{U}(H)$.
\bigskip\noindent
\end{proof}

\textit{Acknowledgement.} The present work is part of the author's PhD thesis under the supervision of Prof. Solecki. I am thankful to Prof. Solecki for many fruitful discussions and many on point suggestions and revisions. In particular, the idea of connecting the Glasner--Weiss problem with results in \cite{DL} and \cite{S1} is due to him.

\end{document}